\documentclass[10pt]{article}

\usepackage{amsmath,amssymb,amsthm,mathrsfs,dsfont}
\usepackage{cite}
\usepackage[margin=3cm]{geometry} %¿í°æÊ±ÓÃ£¬ÅäºÏË«±¶ÐÐ¾à

\usepackage{titlesec,hyperref}
\usepackage{tocloft}
\usepackage{color}
\usepackage{fancyhdr}
\titleformat{\subsection}{\bfseries}{\thesubsection.\enspace}{1pt}{}

\newtheorem{theo}{Theorem}[section]
\newtheorem{lemm}[theo]{Lemma}
\newtheorem{defi}[theo]{Definition}
\newtheorem{coro}[theo]{Corollary}

\numberwithin{equation}{section}

\allowdisplaybreaks %ÔÊÐí¹«Ê½·ÖÒ³ÏÔÊ¾

\begin{document}
	\title{Existence and uniqueness of the global conservative solutions for the generalized Camassa-Holm equation with dual-power nonlinearities
		\hspace{-4mm}
	}
	
	\author{Jian $\mbox{Chen}^1$\footnote{E-mail: chenj883@mail2.sysu.edu.cn},\quad
		Xiaoxin $\mbox{Chen}^1$\footnote{E-mail: chenxx233@mail2.sysu.edu.cn},\quad
		Zhaoyang $\mbox{Yin}^{1}$\footnote{E-mail: mcsyzy@mail.sysu.edu.cn}\\
		$^1\mbox{School}$ of Science,\\ Shenzhen Campus of Sun Yat-sen University, Shenzhen 518107, China}
	\date{}
	\maketitle
	\hrule
	
	\begin{abstract}
		In this paper, we investigate the global conservative solutions to the generalized Camassa-Holm equation with dual-power nonlinearities. By introducing a new set of variables, we transform the original equation into an equivalent semi-linear system, which allows us to establish the global existence of conservative solutions. Furthermore, for a given global conservative solution, we construct some auxiliary variables tailored to its specific structure and demonstrate that they satisfy a semi-linear system with a unique solution, thereby deriving the uniqueness of conservative solutions to the original equation.
		
		\vspace*{5pt}
		\noindent {\it 2020 Mathematics Subject Classification}: 35A01, 35A02, 35D30
		
		\vspace*{5pt}
		\noindent{\it Keywords}: The generalized Camassa-Holm equation; Dual-power nonlinearities; Global conservative solutions; Uniqueness 
	\end{abstract}
	
	%\vspace*{3pt}
	
	%\phantomsection
	%\addcontentsline{toc}{section}{\contentsname}
	%添加目录到书签
	\tableofcontents
	
	\section{Introduction}
	
	In this paper, we are mainly concerned with the global conservative solutions for the generalized Camassa-Holm equation with dual-power nonlinearities
	\begin{equation}\label{eq0}
		u_t-\mu u_{txx}+2ku_x+\eta u_{xxx}=f(u)u_x+s(2u_xu_{xx}+uu_{xxx}),
	\end{equation}
	where $f(u)=Au+Bu^m,\ \mu>0,\ k,\eta,s,A,B\in\mathbb{R},\ m\in\mathbb{N}$ and the function $u=u(t,x)$ stands for the average fluid velocity. \eqref{eq0} was first proposed by Nanta et al. in \cite{nanta2021identification} as a new water wave model with important physical significance, in which they also derived a class of analytical solitary wave solutions and rigorously proved second-order convergence of the numerical scheme. Recently, Dong \cite{dong2024wave} studied the local well-posedness, blow-up mechanism and wave-breaking phenomena for the Cauchy problem associated to \eqref{eq0}. Qiu et al. \cite{qiu2022traveling}  utilized the dynamical system method to classify the traveling wave solutions of \eqref{eq0}, including peakons, compactons, and solitary waves. Li et al. \cite{li2024classification} further provided a complete classification of all traveling wave solutions.
	
	When $\eta=\mu=1,\ A=-1,\ k=s=B=0$, \eqref{eq0} reduces to the BBM-KdV equation
	\begin{align}\label{BBM-Kdv}
		u_t+u_{txx}+u_{xxx}+uu_x=0,
	\end{align}
	which was originally proposed by Bona and Smith \cite{bona1975initial}, serving as a fundamental model for unidirectional waves incorporating dissipation and nonlinear dispersion. Dutykh and Pelinovsky \cite{dutykh2014numerical} utilized numerical simulations to explore solitonic gas dynamics and collective soliton behaviors in \eqref{BBM-Kdv}. Moreover, Mancas and Adams \cite{mancas2017elliptic} established both local and global well-posedness for the initial value problem of \eqref{BBM-Kdv} in $H^s(\mathbb{R})$ ($s \ge 1$) by contraction mapping and energy estimates.
	
	When $s=\mu,\ A+B=-3,\ m=1$, \eqref{eq0} reduces to the Dullin-Gottwald-Holm (DGH) equation
	\begin{align}\label{DGH}
		u_t-\mu u_{txx}+2ku_x+\mu u_{xxx}+3uu_x-\mu (2u_xu_{xx}+uu_{xxx})=0,
	\end{align}
	which was derived by Dullin et al.\cite{dullin2001integrable} from water wave theory via asymptotic analysis and near-identity normal form transformations, describing the unidirectional propagation of surface waves in the shallow water regime. \eqref{DGH} possesses a bi-Hamiltonian structure and admits exact peakons \cite{dullin2001integrable}. The local well-posedness of the Cauchy problem for \eqref{DGH} in $H^s(\mathbb{R})$ ($s > 3/2$), along with the limiting behavior of solutions as $\mu \to 0$, the stability of solitary waves, and scattering data, were investigated in \cite{tian2005well}. Furthermore, blow-up mechanism and wave-breaking phenomena for both line and periodic cases to \eqref{DGH} have been presented respectively in \cite{liu2006global,zhang2008blow}.
	
	When $\eta=B=0,\ A=-3,\ \mu=s=1$, \eqref{eq0} reduces to the celebrated Camassa-Holm (CH) equation
	\begin{align}\label{CH}
		u_t-u_{txx}+3uu_x=2u_xu_{xx}+uu_{xxx},
	\end{align}
	which was derived in \cite{camassa1993integrable,fuchssteiner1981symplectic}, models the unidirectional propagation of shallow water waves under gravity, and is renowned for its peaked solitons. In the last three decades, the CH equation has been studied extensively from various perspectives. The literature covers a wide range of topics, ranging from complete integrability \cite{fisher1999camassa, fuchssteiner1981symplectic} and geometric structures \cite{kouranbaeva1999camassa, misiolek1998shallow,MR2216268} to well-posedness issues \cite{danchin2001few,danchin2003note,li2000well}. Furthermore, the wave-breaking phenomena \cite{constantin2000existence,constantin1998global,constantin1998wave,constantin2000blow,MR4851884}, stability of solutions \cite{constantin2000stability,constantin2002stability}, and ill-posedness \cite{GuoYe2022ill,GuoLiu2019ill,li2025ill} have been thoroughly studied.
	
	%and the existence of global conservative \cite{Constantin2007conservative} and dissipative solutions \cite{bressan2007global} have been thoroughly studied.
	
	To extend solutions beyond finite-time wave breaking, Bressan and Constantin \cite{Constantin2007conservative} pioneered a characteristic method to establish the global existence of conservative solutions for the CH equation. Building upon this, Bressan et al. \cite{bressan2014uniqueness} subsequently proved the uniqueness of such solutions by developing a direct inverse route that singles out a unique characteristic curve. These seminal frameworks have been successfully adapted to other models, including, but not limited to, the Constantin-Lannes equation \cite{MR2481064,MR3147227,MR4643656}, the rotation Camassa-Holm equation\cite{MR3936045,MR3912736} and the nonlinear dispersive wave equations \cite{MR2292515,MR4697983,MR4888793}.
	
	Motivated by the works mentioned above, we herein investigate the global existence and uniqueness of conservative solutions to \eqref{eq0}. It is convenient to rewrite the Cauchy problem of \eqref{eq0} as
	\begin{equation}\label{eq1}
		\left\{
		\begin{array}{ll}
			u_t+\left(\frac{s}{\mu}u-\frac{\eta}{\mu}\right)u_x
			=-P_x,\ &t>0,\ x\in\mathbb{R}, \\
			u(0,x)=u_0(x), &x\in\mathbb{R},
		\end{array}\right.
	\end{equation}
	where the nonlocal source term $P$ is defined by
	\begin{equation}
		P\triangleq \frac{1}{2\sqrt{\mu}}e^{-\frac{|\cdot|}{\sqrt{\mu}}}*\left(-\left(\frac{A}{2}+\frac{s}{2\mu}\right)u^2+\frac{s}{2}u_x^2-\frac{B}{m+1}u^{m+1}+\left(2k+\frac{\eta}{\mu}\right)u\right).
	\end{equation}
	The existence proof relies on introducing new variables that resolve all finite-time singularities, yielding an equivalent semi-linear system. The major difficulty in our work is that \eqref{eq1} contains nonlocal higher-order nonlinear terms and the linear convolution contribution, which require more delicate analysis and estimates to establish the Lipschitz continuity as well as the extension of solutions for the associated semi-linear system. To overcome this difficulty, we establish refined estimates for the nonlocal terms in the new coordinates, in particular a uniform $L^\infty$-bound for the linear convolution term, which is achieved by adapting the variable transformation and employing a contradiction argument. These estimates yield the global solution of the associated semi-linear system, and reverting to the original variables then recovers the global conservative solutions. For uniqueness, we associate any global conservative solution with specific auxiliary variables. This allows us to derive an integral equation that determines a unique characteristic, ultimately reducing the uniqueness of $u$ to that of a uniquely solvable semi-linear system.
	
	The remainder of this paper is organized as follows. In Section \ref{Basic definitions}, we provide some preliminary definitions and state our main result. In Section \ref{Existence}, by introducing a new set of variables, we convert the original equation into an equivalent semi-linear system, from which the existence of global conservative solutions is derived. In Section \ref{Uniqueness}, we establish the uniqueness of solutions to \eqref{eq1} by constructing an auxiliary system of ordinary differential equations.
	
	\section{\textbf{Basic definitions and results}}\label{Basic definitions}
	In this section, we recall some definitions of global conservative solutions for \eqref{eq1} and give our main result.
	
	For smooth solutions, differentiating \eqref{eq1} with respect to $x$, we have
	\begin{equation}\label{ux}
		u_{xt}+\left(\frac{s}{\mu}u-\frac{\eta}{\mu}\right)u_{xx}
		=\frac{1}{\mu}\left(-\frac{s}{2}u_x^2-P-\left(\frac{A}{2}+\frac{s}{2\mu}\right)u^2-\frac{B}{m+1}u^{m+1}+\left(2k+\frac{\eta}{\mu}\right)u\right).
	\end{equation}
	It follows from \eqref{eq1} and \eqref{ux} that
	\begin{equation*}
		\left(u^2\right)_t+\left(\frac{2s}{3\mu}u^3-\frac{\eta}{\mu}u^2+2uP\right)_x=2u_xP,
	\end{equation*}
	\begin{equation}\label{ux2}
		\left(u_x^2\right)_t+\frac{1}{\mu}\left(\left(su-\eta\right)u_x^2+\left(\frac A3+\frac{s}{3\mu}\right)u^3+\frac{2B}{(m+1)(m+2)}u^{m+2}-\left(2k+\frac{\eta}{\mu}\right)u^2\right)_x=-\frac{2}{\mu}u_xP.
	\end{equation}
	Hence, for smooth solutions,
	\begin{equation}\label{conservative_E(t)}
		\int_{\mathbb{R}}u^2(t,x)+\mu u_x^2(t,x)dx
	\end{equation}
	is constant in time. Let $w=u_x^2$, then \eqref{ux2} implies
	\begin{equation}\label{eqw}
		w_t+\left(\left(\frac{s}{\mu}u-\frac{\eta}{\mu}\right)w\right)_x=\frac{2}{\mu}u_x\left(-\left(\frac A2+\frac{s}{2\mu}\right)u^2-\frac{B}{m+1}u^{m+1}+\left(2k+\frac{\eta}{\mu}\right)u-P\right).
	\end{equation}
	
	Before providing our main result, we give some definitions.
	\begin{defi}\label{solution}
		We call $u=u(t,x)$ a solution of the Cauchy problem \eqref{eq1} on $[0,T]$ if $u$ is a H\"{o}lder continuous function defined on $[0,T]\times\mathbb{R}$ with the following properties. \\
		1. $u(t,\cdot)\in H^1(\mathbb{R}),\ \forall\ t\in[0,T]$.\\
		2. the map $t\mapsto u(t,\cdot)$ is Lipschitz continuous from $[0,T]$ into $L^2(\mathbb{R})$ and satisfies the initial condition $u(0,x)=u_0(x)$ together with
		\begin{equation}\label{u}
			\frac{d}{dt}u=-\left(\frac{s}{\mu}u-\frac{\eta}{\mu}\right)u_x-P_x
		\end{equation}
		for a.e. $t$. Here \eqref{u} is understood as an equality between functions in $L^2(\mathbb{R})$.
	\end{defi}
	
	\begin{defi}\label{conservative}
		We call $u=u(t,x)$ a global conservative solution of \eqref{eq1} if $u$ satisfies the following properties. \\
		1. $u$ provides a solution to the Cauchy problem \eqref{eq1} on $[0,\infty)$ in sense of Definition \ref{solution}.\\
		2. There exists a family of bounded Radon measures $\left\{\mu_{(t)},\ t\geq0\right\}$, depending continuously on time with respect to the topology of weak convergence of measures, whose absolutely continuous part has density $u_x^2$ with respect to the Lebesgue measure. This family provides a measure-valued solution $w$ to the balance law \eqref{eqw}, namely
		\begin{align}
			&\int_{0}^{\infty}\int_\mathbb{R}\varphi_t+\varphi_x\left(\frac{s}{\mu}u-\frac{\eta}{\mu}\right)d\mu_{(t)}dt+\int_{\mathbb{R}}u_{0x}^{2}\varphi(0,x)dx\notag\\
			+&\frac{2}{\mu}\int_{0}^{\infty}\int_\mathbb{R}\left(-\left(\frac{A}{2}+\frac{s}{2\mu}\right)u^2-\frac{B}{m+1}u^{m+1}+\left(2k+\frac{\eta}{\mu}\right)u-P\right)u_x\varphi dxdt=0\label{w}
		\end{align}
		for every test function $\varphi\in\mathcal{C}_c^1(\mathbb{R}^+\times\mathbb{R})$.
	\end{defi}
	
	The main theorem of this paper is as follows.
	\begin{theo}
		Let $s\neq0$, then for any initial data $u_0\in H^1(\mathbb{R})$, the Cauchy problem \eqref{eq1} has a unique global conservative solution in the sense of
		Definition \ref{conservative}.
	\end{theo}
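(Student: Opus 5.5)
The plan follows the Bressan--Constantin scheme for existence and the Bressan--Chen--Zhang scheme for uniqueness, adapted to the transport speed $\frac{s}{\mu}u-\frac{\eta}{\mu}$ and to the source in \eqref{eq1}. Since $s\neq0$, the characteristic speed depends nontrivially on $u$, so energy concentration is transported with the fluid and the usual energy variable can be used.

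\textbf{Existence.} First rescale time (replace $t$ by $\frac{s}{\mu}t$ and absorb the $-\frac{\eta}{\mu}$ drift by a Galilean shift $x\mapsto x+\frac{\eta}{\mu}t$ combined with the rescaling). Then introduce the energy variable $\xi$ through
\[
\int_0^{\bar x_0(\xi)}\bigl(1+u_{0x}^2\bigr)\,dx=\xi ,
\]
the characteristics $y(t,\xi)$, and the variables $v=2\arctan u_x$, $q=(1+u_x^2)\,y_\xi$. Using \eqref{ux} and \eqref{eqw}, write the semilinear system for $(u,v,q)$:
\[
u_t=-P_x,\qquad v_t=\frac{c_1}{\mu}\Bigl(-s\sin^2\tfrac v2+2\cos^2\tfrac v2\,G(u)-2\cos^2\tfrac v2\,P\Bigr),\qquad q_t=c_2\,q\,\sin v\,\bigl(G(u)+\tfrac{s}{2\mu}-P\bigr),
\]
where
\[
G(u)=\frac{1}{2}\Bigl(-\bigl(\tfrac A2+\tfrac{s}{2\mu}\bigr)u^2-\tfrac{B}{m+1}u^{m+1}+\bigl(2k+\tfrac{\eta}{\mu}\bigr)u\Bigr).
\]
Here $c_1,c_2$ are constants, and $P$ and $P_x$ are expressed as integrals in $\xi$ with kernels $e^{-|\int_{\xi'}^{\xi}\cos^2(v/2)q|/\sqrt\mu}$.

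Next, prove local existence by a contraction argument in $X=H^1\times(L^2\cap L^\infty)\times L^\infty$ on a domain where $q$ stays in $[q^-,q^+]$. This needs Lipschitz estimates of $P$ and $P_x$ in these variables. The polynomial terms $u^2$ and $u^{m+1}$ are handled by the $L^\infty$ bound of $u$, and the linear term $(2k+\eta/\mu)u$ by $\|u\|_{L^2}$ and the energy.

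Then extend the solution globally. The energy $\int\bigl(u^2\cos^2\tfrac v2+\mu\sin^2\tfrac v2\bigr)q\,d\xi$ is conserved, so $\|u\|_{L^\infty}$ is bounded and hence $\|P\|_{L^\infty},\|P_x\|_{L^\infty}$ are bounded. This gives exponential bounds on $q$ and $1/q$, and a Gronwall bound on $\|v\|_{L^2}$ and $\|u\|_{H^1}$.

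\textbf{Main obstacle.} I expect the hardest step to be the uniform $L^\infty$ bound on the linear convolution term $e^{-|\cdot|/\sqrt\mu}*u$. This is not controlled directly by $\|u\|_{L^2}$ in the $\xi$ variable, because of the weight $\cos^2\frac v2\, q$. The first thing I would try is to change variables back to $x$ on the set where $\cos v\neq-1$; there the term is bounded by $\|e^{-|\cdot|}\|_{L^2}\|u\|_{L^2}\le C E_0^{1/2}$. If some estimate in the $\xi$ frame still fails, I would argue by contradiction, using that $\operatorname{meas}\{\xi:\cos v<0\}$ is bounded by the energy.

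Finally, map back to the original variables with $u(t,x)=u(t,\xi)$ for $x=y(t,\xi)$ and $\mu_{(t)}=y_\#\bigl(\mu\sin^2\tfrac v2\, q\,d\xi\bigr)$. Check Hölder continuity, the Lipschitz property into $L^2$, the identity \eqref{u}, and \eqref{w} exactly as in Bressan--Constantin. Weak continuity of $\mu_{(t)}$ follows from the continuity of $(u,v,q)$ in $\xi$.

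\textbf{Uniqueness.} Given any conservative solution, I would define the characteristic $\beta(t)$ through each $\bar\beta$ by the implicit relation
\[
\beta(t)+\int_{-\infty}^{\beta(t)}d\mu_{(t)}=\bar\beta .
\]
Using \eqref{w} with suitable test functions, show that $\beta$ satisfies an integral equation with Lipschitz right-hand side, and hence is unique. Then show that $u$ and the energy along $\beta$ evolve by the same semilinear ODE system as in the existence part, whose solution is unique. Consequently any two conservative solutions with the same $u_0$ coincide.
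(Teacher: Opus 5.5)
\textbf{Gap in the uniqueness part.} With $\bar\beta$ held fixed, the relation $\beta(t)+\mu_{(t)}\{(-\infty,\beta(t)]\}=\bar\beta$ only locates, at each time, the point $y(t,\bar\beta)$ whose energy coordinate is $\bar\beta$. The curve $t\mapsto y(t,\bar\beta)$ is not a characteristic. Along a genuine characteristic $y(t)$, the energy coordinate $\beta(t)=y(t)+\mu_{(t)}\{(-\infty,y(t)]\}$ moves. Write $\Phi(u)\triangleq-\bigl(\frac A2+\frac{s}{2\mu}\bigr)u^2-\frac{B}{m+1}u^{m+1}+\bigl(2k+\frac\eta\mu\bigr)u$. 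Combining $\dot y=\frac s\mu u-\frac\eta\mu$ with \eqref{w} gives $\dot\beta=G(t,\beta)$, where
\[
G(t,\beta)=\frac{s}{\mu}u\bigl(t,y(t,\beta)\bigr)-\frac{\eta}{\mu}+\frac{2}{\mu}\int_{-\infty}^{y(t,\beta)}u_x\bigl(\Phi(u)-P\bigr)\,dx .
\]
This does not vanish in general, already for Camassa--Holm.

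Likewise, the integral equation with Lipschitz right-hand side must be posed for this evolving energy coordinate, not for the spatial position. The position obeys $\dot y=\frac s\mu u-\frac\eta\mu$, whose right-hand side is only H\"older in $y$; that is the whole difficulty.

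The missing chain of steps is as follows. First, $\beta\mapsto y(t,\beta)$ is $1$-Lipschitz and $\int_{y(t,\beta_1)}^{y(t,\beta_2)}|u_x|\,dx\le\frac12(\beta_2-\beta_1)$, so $G(t,\cdot)$ is Lipschitz uniformly in $t$. Then solve $\beta(t)=\bar\beta+\int_0^tG(\tau,\beta(\tau))\,d\tau$ uniquely. Next, show by testing \eqref{w} with cut-offs equal to $1$ to the left of a comparison curve that $t\mapsto y(t,\beta(t))$ really satisfies $\dot y=\frac s\mu u-\frac\eta\mu$, and that \eqref{u_Px} holds along it. Only then can you differentiate in $\bar\beta$ along good characteristics. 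This uses Rademacher's theorem, Lipschitz continuity of $(t,\beta)\mapsto(y,u)$, and the strictly increasing Lipschitz flow $\bar\beta\mapsto\beta(t,\bar\beta)$. It yields the linear ODEs for $y_\beta,u_\beta$, hence the equation for $v=2\arctan u_x$ and a uniquely solvable system in $(\beta,y,u,v)$. Your last sentence asserts this conclusion without any of these ingredients.

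\textbf{Existence part.} This is essentially the paper's argument. Your Cauchy--Schwarz bound $\|e^{-|\cdot|/\sqrt\mu}\|_{L^2}\|u(t)\|_{L^2(dx)}\le CE_0^{1/2}$ for the linear convolution term is a valid substitute for the paper's $\|u\|_{L^\infty}\int e^{-|z|/\sqrt\mu}dz$. Both rely on $y_\xi=q\cos^2\frac v2$ and $u_\xi=\frac q2\sin v$, which you should derive from the system.

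Several bookkeeping points need fixing:
\begin{itemize}
\item In the original time variable the system must read $v_t=\frac1\mu\bigl[2\cos^2\frac v2(\Phi(u)-P)-s\sin^2\frac v2\bigr]$ and $q_t=\frac1\mu\bigl(\frac s2+\Phi(u)-P\bigr)q\sin v$. Your weights, with $G(u)=\frac12\Phi(u)$, do not reproduce this. The identities $u_\xi=\frac q2\sin v$ and $\frac{d}{dt}E=0$ depend on these exact weights and would fail.
\item The measure must be $y_\#(\sin^2\frac v2\,q\,d\xi)$, without the factor $\mu$. Otherwise its density is $\mu u_x^2$ and the initial term in \eqref{w} does not match.
\item For $s<0$, your rescaling $t\mapsto\frac s\mu t$ reverses time. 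This is harmless for the reversible semilinear system, but you should say so.
\end{itemize}
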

	
	\section{Existence}\label{Existence}
	This section is devoted to proving the existence of global conservative solutions for the Cauchy problem \eqref{eq1}. We first introduce an equivalent semi-linear system and establish the existence of its global solutions. This result is then applied to obtain the global conservative solutions for the original equation.
	\subsection{An equivalent semi-linear system}\label{sec_equivalent}
	Given the initial data $u_0\in H^1(\mathbb{R})$ and a new variable $\xi\in\mathbb{R}$, we define the non-decreasing map $\xi\mapsto y_0(\xi)$ by the following equation:
	\begin{equation}\label{y0}
		\int_0^{y_0(\xi)}(1+u_{0x}^2)dx=\xi.
	\end{equation}
	Assuming that the solution $u$ to \eqref{eq1} is Lipschitz continuous for $t\in[0,T]$, we now derive a system equivalent to \eqref{eq1} in the independent variables $(t,\xi)$. We define the characteristics $t\mapsto y(t,\xi)$ as the solutions of
	\begin{equation}\label{y}
		\frac{\partial}{\partial t} y(t,\xi)=\frac{s}{\mu}u\left(t,y(t,\xi)\right)-\frac{\eta}{\mu}, \quad y(0,\xi)=y_0(\xi).
	\end{equation}
	Introduce some new variables
	\begin{equation*}
		u(t,\xi)\triangleq u\left(t,y(t,\xi)\right),\quad P(t,\xi)\triangleq P\left(t,y(t,\xi)\right),\quad P_x(t,\xi)\triangleq P_x\left(t,y(t,\xi)\right),
	\end{equation*}
	\begin{equation}\label{vq}
		v(t,\xi)\triangleq 2\arctan u_x,\quad q(t,\xi)\triangleq (1+u_x^2)\cdot\frac{\partial y}{\partial\xi},
	\end{equation}
	with $u_{x}=u_{x}(t,y(t,\xi))$. From \eqref{y0}, we have
	$$q(0,\xi)\equiv1.$$
	A straightforward calculation establishes the following equalities:
	\begin{equation}\label{equalities}
		\frac{1}{1+u_{x}^{2}}=\cos^{2}\frac{v}{2},\quad\frac{u_{x}}{1+u_{x}^{2}}=\frac{1}{2}\sin v,\quad\frac{u_{x}^{2}}{1+u_{x}^{2}}=\sin^{2}\frac{v}{2},\quad\frac{\partial y}{\partial\xi}=\frac{q}{1+u_{x}^{2}}=\cos^{2}\frac{v}{2}\cdot q.
	\end{equation}
	From \eqref{equalities}, we obtain an expression for $P$ and $P_x$ in terms of the new variable $\xi$, namely
	\begin{align}
		P(\xi) =&\frac{1}{2\sqrt{\mu}}\int_{-\infty}^{\infty}\exp\left\{-\frac{1}{\sqrt{\mu}}\left|\int_{\xi}^{\xi^{\prime}}\cos^{2}\frac{v(s)}{2}\cdot q(s)ds\right|\right\}\notag \\
		& \cdot\left\{\left[-(\frac{A}{2}+\frac{s}{2\mu})u^2(\xi^{\prime})-\frac{B}{m+1}u^{m+1}(\xi^{\prime})+(2k+\frac{\eta}{\mu})u(\xi^{\prime})\right]\cos^2\frac{v(\xi^{\prime})}{2}+\frac{s}{2}\sin^2\frac{v(\xi^{\prime})}{2}\right\}q(\xi^{\prime})d\xi^{\prime},\label{P}\\
		P_x(\xi) =&\frac{1}{2\mu}\left(\int_\xi^\infty-\int_{-\infty}^\xi\right)\exp\left\{-\frac{1}{\sqrt{\mu}}\left|\int_{\xi}^{\xi^{\prime}}\cos^{2}\frac{v(s)}{2}\cdot q(s)ds\right|\right\}\notag \\
		& \cdot\left\{\left[-(\frac{A}{2}+\frac{s}{2\mu})u^2(\xi^{\prime})-\frac{B}{m+1}u^{m+1}(\xi^{\prime})+(2k+\frac{\eta}{\mu})u(\xi^{\prime})\right]\cos^2\frac{v(\xi^{\prime})}{2}+\frac{s}{2}\sin^2\frac{v(\xi^{\prime})}{2}\right\}q(\xi^{\prime})d\xi^{\prime}.\label{Px}
	\end{align}
	Owing to \eqref{eq1} and \eqref{y}, we deduce that
	\begin{equation*}
		\frac{\partial}{\partial t}u(t,\xi)=-P_x(t,\xi),
	\end{equation*}
	with $P_x$ given by \eqref{Px}. Similarly, using \eqref{y}-\eqref{equalities} and \eqref{ux}, we can obtain
	\begin{equation*}
		\frac{\partial}{\partial t}v(t,\xi)=\frac{1}{\mu}(1+\cos v)\left(-\left(\frac A2+\frac{s}{2\mu}\right)u^2-\frac{B}{m+1}u^{m+1}+\left(2k+\frac{\eta}{\mu}\right)u-P\right)-\frac{s}{\mu}\sin^2\frac{v}{2}
	\end{equation*}
	and
	\begin{equation*}
		\frac{\partial}{\partial t}q(t,\xi)=\frac{1}{\mu}\left(\frac{s}{2}-\left(\frac A2+\frac{s}{2\mu}\right)u^2-\frac{B}{m+1}u^{m+1}+\left(2k+\frac{\eta}{\mu}\right)u-P\right)\sin v\cdot q,
	\end{equation*}
	with $P$ given by \eqref{P}.
	
	\subsection{Global solutions of the semi-linear system}
	In this subsection, we study the global solutions of the semi-linear system \eqref{semi-linear system}. Based on the analysis in Section \ref{sec_equivalent}, we rewrite the Cauchy problem for the variables $(u,v,q)$ in the form
	\begin{equation}\label{semi-linear system}
		\left\{
		\begin{array}{l}
			\frac{\partial u}{\partial t}=-P_x,\\
			\frac{\partial v}{\partial t}=\frac{1}{\mu}(1+\cos v)\left(-\left(\frac A2+\frac{s}{2\mu}\right)u^2-\frac{B}{m+1}u^{m+1}+\left(2k+\frac{\eta}{\mu}\right)u-P\right)-\frac{s}{\mu}\sin^2\frac{v}{2},\\
			\frac{\partial q}{\partial t}=\frac{1}{\mu}\left(\frac{s}{2}-\left(\frac A2+\frac{s}{2\mu}\right)u^2-\frac{B}{m+1}u^{m+1}+\left(2k+\frac{\eta}{\mu}\right)u-P\right)\sin v\cdot q,
		\end{array}\right.
	\end{equation}
	with
	\begin{equation}\label{semi-linear system_initial data}
		\left\{
		\begin{array}{l}
			u(0,\xi)=u_0\left(y_0(\xi)\right), \\
			v(0,\xi)=2\arctan u_{0x}\left(y_0(\xi)\right), \\
			q(0,\xi)=1.
		\end{array}\right.
	\end{equation}
	We consider \eqref{semi-linear system} as an ordinary differential equation in the Banach space
	\begin{equation*}
		X\triangleq H^1(\mathbb{R})\times\left[L^2(\mathbb{R})\cap L^\infty(\mathbb{R})\right]\times L^\infty(\mathbb{R}),
	\end{equation*}
	with norm
	\begin{equation*}
		\|(u,v,q)\|_X\triangleq\|u\|_{H^1}+\|v\|_{L^2}+\|v\|_{L^\infty}+\|q\|_{L^\infty}.
	\end{equation*}
	\begin{theo}
		Let $u_0\in H^1(\mathbb{R})$. Then the Cauchy problem \eqref{semi-linear system}-\eqref{semi-linear system_initial data} has a unique global solution.
	\end{theo}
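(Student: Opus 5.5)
Local existence and uniqueness will come from the Banach fixed point theorem (Picard iteration) applied to \eqref{semi-linear system}, regarded as an ODE $\frac{d}{dt}(u,v,q)=F(u,v,q)$ in $X$. Global existence will follow by continuation: we show that the $X$-norm of the solution stays bounded on bounded time intervals.

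\textbf{Step 1: local Lipschitz continuity of $F$.} Fix a domain
\[
\Omega=\{(u,v,q)\in X:\ \|u\|_{H^1}\le \alpha,\ \|v\|_{L^2}\le\beta,\ \|v\|_{L^\infty}\le \tfrac{3\pi}{2},\ q(\xi)\in[q^-,q^+]\ \text{a.e.}\}.
\]
On $\Omega$, for $|\xi'-\xi|$ large we have $\int_{\xi}^{\xi'}\cos^2\frac{v}{2}q\,ds\ge c|\xi'-\xi|$. The reason is that the set where $|v|\ge\pi/2$ has measure at most $C\|v\|_{L^2}^2$. Hence the exponential kernel in \eqref{P}--\eqref{Px} is dominated by $\Gamma(\zeta)=\min\{1,e^{(C\beta^2-c|\zeta|)/\sqrt\mu}\}\in L^1\cap L^\infty$.

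Each source term is integrable:
\begin{itemize}
\item $u^2\cos^2\frac v2\,q$ and $u^{m+1}\cos^2\frac v2\,q$ are in $L^1$, since $u\in H^1\subset L^\infty$.
\item $\sin^2\frac v2\,q$ is in $L^1$ (bounded by $v^2q$).
\item The linear term $u\cos^2\frac v2\,q$ is only in $L^2$. Young's inequality $L^1*L^2$ and $L^2*L^2\subset L^\infty$ with $\Gamma\in L^1\cap L^2$ handles it.
\end{itemize}
This gives $P,P_x\in H^1$. For the $\xi$-derivatives we use $\partial_\xi P=P_x\cos^2\frac v2\,q$, and similarly for $\partial_\xi P_x$.

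Differences are estimated by the mean value bound $|e^{-a}-e^{-b}|\le e^{-\min(a,b)}|a-b|$ together with $|\int(\cos^2\frac{v}{2}q-\cos^2\frac{\tilde v}{2}\tilde q)|\le C(\|v-\tilde v\|_{L^2}|\xi-\xi'|^{1/2}+\dots)$, with the polynomial growth absorbed by the exponential decay. The remaining terms of $F$ (in the $v$ and $q$ equations) are smooth functions of $(u,v,q,P)$ with $(1+\cos v)$, $\sin v$, $\sin^2\frac v2$ vanishing appropriately at $v=0$, so they land in $L^2\cap L^\infty$ and $L^\infty$ and are Lipschitz. Picard then gives a unique local solution.

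\textbf{Step 2: a priori bounds for global continuation.} This is the main obstacle.

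\emph{Energy identity.} First we establish
\[
\frac{d}{dt}\int\Big(u^2\cos^2\tfrac v2+\mu\sin^2\tfrac v2\Big)q\,d\xi=0 .
\]
This follows from direct differentiation using \eqref{semi-linear system} and the identities $(P_x)_\xi=\dots$, mirroring the conservation of \eqref{conservative_E(t)}. It also uses $q_\xi$-type identities such as $u_\xi=\frac12\sin v\,q$, which are preserved in time (check $\partial_t(u_\xi-\frac12 q\sin v)=0$). So the energy equals $E_0=\|u_0\|_{L^2}^2+\mu\|u_{0x}\|_{L^2}^2$.

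\emph{Bound on $u$ and on the kernel sources.} Next, $\|u\|_{L^\infty}^2\le 2\int|uu_\xi|\le\int|u|\,|\sin v|\,q\le C E_0$, with constants depending on $\mu$.

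\emph{Bound on $P,P_x$.} The delicate part is the uniform $L^\infty$ bound on $P,P_x$ coming from the linear convolution term $(2k+\eta/\mu)u$, since $\int|u|\cos^2\frac v2\,q$ is not controlled by the energy. Change variables $x=y(\xi')$ with $dx=\cos^2\frac v2\,q\,d\xi'$. Then this contribution becomes $\frac{1}{2\sqrt\mu}\int e^{-|x-y|/\sqrt\mu}|u|dx\le C\|u\|_{L^2_x}$, and $\|u\|_{L^2_x}^2=\int u^2\cos^2\frac v2\,q\,d\xi\le E_0$. To make this rigorous where $y$ may not be strictly increasing, we argue by contradiction on the maximal time at which the bound first fails, or use monotonicity of $y(\xi)=y_0+\int_0^\xi\cos^2\frac v2\,q$. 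The nonlinear terms are bounded by $\|u\|_\infty^{m-1}E_0$ and $E_0$ similarly. Thus $\|P\|_{L^\infty},\|P_x\|_{L^\infty}\le C(E_0)$.

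\emph{Bounds on $q$ and on $L^2$ norms.} Then $|q_t|\le C q$, so $e^{-Ct}\le q\le e^{Ct}$. Moreover, $\|u\|_{L^2},\|v\|_{L^2}$ grow at most exponentially: for the $v$ equation, $\frac d{dt}\|v\|_{L^2}^2\le C(\|v\|_{L^2}^2+\|u\|_{L^2}^2+\|P\|_{L^2}^2)$. Also $\|P\|_{H^1}\le C(\|u\|_{H^1}+\|v\|_{L^2})$ from Step 1 with the now-uniform kernel bound. The same holds for $\|u_\xi\|_{L^2}$ via $u_{\xi t}=-P_{x\xi}$.

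\emph{Control of $\|v\|_{L^\infty}$.} Since $v$ is only relevant modulo $2\pi$ and the equation is $2\pi$-periodic in $v$, we either work with $v$ valued in $[-\pi,\pi]$ or note that $\|v\|_{L^\infty}$ grows at most linearly because $|v_t|\le C$.

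\textbf{Step 3: conclusion.} Gronwall gives bounds on $\|(u,v,q)\|_X$ on every $[0,T]$. The solution therefore stays in a domain of the form $\Omega$ with parameters depending on $T$, and the local solution extends to a unique global one.
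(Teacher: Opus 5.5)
Your overall route is the paper's: Picard iteration on bounded sets of $X$, propagation of $u_\xi=\frac{q}{2}\sin v$, conservation of $E(t)$, the bound $\|u\|_{L^\infty}^2\le E_0/\sqrt{\mu}$, energy-type $L^\infty$ bounds on $P,P_x$, exponential bounds on $q$, and Gronwall for the $L^2$ norms. Your treatment of the linear convolution term is more direct than the paper's. Since $q>0$, the map $\xi'\mapsto Y(\xi')=\int_\xi^{\xi'}\cos^2\frac{v}{2}q\,ds$ is non-decreasing and locally Lipschitz. Cauchy--Schwarz with weight $\cos^2\frac{v}{2}q$, followed by the monotone change of variables $z=Y(\xi')$, then bounds that term by $\mu^{1/4}E_0^{1/2}$ at each fixed time. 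This avoids the paper's detour through the characteristics $y(t,\xi)$, the identity $y_\xi=q\cos^2\frac{v}{2}$, and the continuation argument. You can drop the ``contradiction on the maximal time'' alternative altogether.

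The step that does not go through as written is the ``now-uniform kernel bound'' you invoke for $\|P\|_{L^2}$, $\|P_x\|_{L^2}$ (and hence $\|P\|_{H^1}$). The only kernel bound you have established is the one from Step 1, built from $\mathrm{meas}\{|v|\ge\pi/2\}\le\frac{4}{\pi^2}\|v\|_{L^2}^2$. Its $L^1$ norm grows like $\|v\|_{L^2}^2$, so it depends on the very quantity you are trying to control. Fed into the $v$-equation, it yields a superlinear differential inequality for $\|v\|_{L^2}$, which does not exclude finite-time blow-up, so Gronwall does not close.

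The missing idea, which the paper implements with $\widetilde\Gamma$, is to control the bad set by the conserved energy instead. On $[0,T]$ with $\kappa^{-1}\le q\le\kappa$, one has $\mathrm{meas}\{\xi:\cos^2\frac{v}{2}<\frac12\}\le 2\kappa\int\sin^2\frac{v}{2}\,q\,d\xi\le 2\kappa E_0/\mu$. Hence $\int_\xi^{\xi'}\cos^2\frac{v}{2}q\,ds\ge\frac{1}{2\kappa}\bigl(|\xi'-\xi|-\frac{2\kappa E_0}{\mu}\bigr)$, which gives a dominating kernel with $L^1$ norm $C(E_0,\kappa)$. All sources of $P,P_x$ have $L^2$ norms bounded by $C(E_0,\kappa)$, so $\|P\|_{L^2},\|P_x\|_{L^2}\le C(E_0,\kappa)$ outright, and $\|u\|_{L^2}$, $\|v\|_{L^2}$ grow at most linearly.

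A smaller point concerns the constraint $\|v\|_{L^\infty}\le\frac{3\pi}{2}$ in $\Omega$. At $v=\pm\pi$ one has $v_t=-s/\mu\neq0$, so $v$ can cross $\pm\pi$ and leave $[-\frac{3\pi}{2},\frac{3\pi}{2}]$. Replace that constraint by $\|v\|_{L^\infty}\le\gamma$; your Chebyshev estimate does not need $\frac{3\pi}{2}$. Then rely on the linear growth of $\|v\|_{L^\infty}$. Do not reduce $v$ modulo $2\pi$, since that destroys the continuity of $t\mapsto v(t)$ in $X$.
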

	\begin{proof}
		We consider only the case $s\neq0$, since the case $s=0$ is similar and even simpler.
		
		\textbf{Step 1: Local existence}
		
		By the standard theory of ordinary differential equations in Banach spaces, it suffices to prove that the right hand side of \eqref{semi-linear system} is Lipschitz continuous on every bounded domain $\Omega\subset X$, which is defined as
		\begin{align*}
			\Omega=\left\{(u,v,q):\ \|u\|_{H^{1}}\leq\alpha,\ \|v\|_{L^{2}}\leq\beta,\ \|v\|_{L^{\infty}}\leq\frac{3\pi}{2},\ q(x)\in[q^{-},q^{+}]\ \mathrm{for\ a.e.}x\in \mathbb{R}\right\},
		\end{align*}
		for any constants $\alpha,\beta,q^-,q^+>0$. It's easy to check that the maps
		$$(1+\cos v)\left(-\left(\frac A2+\frac{s}{2\mu}\right)u^2-\frac{B}{m+1}u^{m+1}+\left(2k+\frac{\eta}{\mu}\right)u\right),\quad\sin^2\frac{v}{2}$$
		and 
		$$\left(\frac{s}{2}-\left(\frac A2+\frac{s}{2\mu}\right)u^2-\frac{B}{m+1}u^{m+1}+\left(2k+\frac{\eta}{\mu}\right)u\right)\sin v\cdot q$$
		are all Lipschitz continuous from $\Omega$ into $L^2(\mathbb{R})\cap L^\infty(\mathbb{R})$. Hence, the proof reduces to showing that 
		\begin{equation}\label{maps_P_Px}
			(u,v,q)\mapsto P,\quad(u,v,q)\mapsto P_x
		\end{equation}
		are Lipschitz continuous from $\Omega$ into $H^1(\mathbb{R})$. Since $|v|\leq\frac{3\pi}{2}$ implies 
		$$\sin^2\frac{v}{2}\leq\frac{v^2}{4}\leq\frac{9\pi^2}{8}\sin^2\frac{v}{2},$$
		we have
		\begin{align*}
			\mathrm{meas}\left\{\xi\in\mathbb{R}:\left|\frac{v(\xi)}{2}\right|\geq\frac{\pi}{4}\right\}
			\leq18\int_{\left\{\xi\in\mathbb{R}:18\sin^{2}\frac{v(\xi)}{2}\geq1\right\}}\sin^{2}\frac{v(\xi)}{2}d\xi\leq\frac{9}{2}\beta^{2}.
		\end{align*}
		Thus, for any $\xi_1<\xi_2$, we obtain
		\begin{align*}
			\int_{\xi_{1}}^{\xi_{2}}\cos^{2}\frac{v(\xi)}{2}\cdot q(\xi)d\xi \geq\int_{\left\{\xi\in[\xi_{1},\xi_{2}]:\mid\frac{v(\xi)}{2}\mid\leq\frac{\pi}{4}\right\}}\frac{q^{-}}{2}d\xi \geq\left(\frac{\xi_{2}-\xi_{1}}{2}-\frac{9}{4}\beta^{2}\right)q^{-}. 
		\end{align*}
		Introducing the exponentially decaying function
		\begin{equation*}
			\Gamma(\zeta)\triangleq\min\left\{1,\exp\left[\frac{q^-}{\sqrt{\mu}}\left(\frac{9}{4}\beta^2-\frac{|\zeta|}{2}\right)\right]\right\},
		\end{equation*}
		it's easy to show that $\|\Gamma\|_{L^1}=9\beta^2+\frac{4\sqrt{\mu}}{q^-}<\infty$. 
		
		To establish the Lipschitz continuity of the maps given in \eqref{maps_P_Px}, we first prove that $P,\ P_x\in H^1(\mathbb{R})$, namely $P,\ \partial_{\xi}P,\ P_x,\ \partial_{\xi}P_x\in L^2(\mathbb{R})$. For simplicity, we only focus on the estimates for $P_x$ and $\partial_{\xi}P_x$, as the arguments for $P$ and $\partial_{\xi}P$ are similar. Since \eqref{Px} yields 
		$$
		\mid P_x\mid\leq\frac{q^+}{2\mu}\Gamma*\left| \left(-(\frac{A}{2}+\frac{s}{2\mu})u^2-\frac{B}{m+1}u^{m+1}+(2k+\frac{\eta}{\mu})u\right)\cos^2\frac{v}{2}+\frac{s}{2}\sin^2\frac{v}{2}\right|,
		$$
		we obtain
		$$
		\|P_x\|_{L^2}\leq\frac{q^+}{2\mu}\|\Gamma\|_{L^1}C(\|u\|_{L^\infty}\|u\|_{L^2}+\|u\|_{L^\infty}^m\|u\|_{L^2}+\|u\|_{L^2}+\|v\|_{L^\infty}\|v\|_{L^2})<\infty.
		$$
		Differentiating \eqref{Px} with respect to $\xi$, we see
		\begin{align}
			\frac{\partial}{\partial\xi}P_{x}(\xi) =&-\frac{1}{\mu}\left\{\left[-(\frac{A}{2}+\frac{s}{2\mu})u^2(\xi)-\frac{B}{m+1}u^{m+1}(\xi)+(2k+\frac{\eta}{\mu})u(\xi)\right]\cos^2\frac{v(\xi)}{2}+\frac{s}{2}\sin^2\frac{v(\xi)}{2}\right\}q(\xi) \notag\\
			& +\frac{1}{2\mu\sqrt{\mu}}\left(\int_{\xi}^{\infty}-\int_{-\infty}^{\xi}\right)\exp\left\{-\frac{1}{\sqrt{\mu}}\left|\int_{\xi}^{\xi^{\prime}}\cos^{2}\frac{v(s)}{2}\cdot q(s)ds\right|\right\} \notag\\
			& \cdot\left\{\left[-(\frac{A}{2}+\frac{s}{2\mu})u^2(\xi^{\prime})-\frac{B}{m+1}u^{m+1}(\xi^{\prime})+(2k+\frac{\eta}{\mu})u(\xi^{\prime})\right]\cos^2\frac{v(\xi^{\prime})}{2}+\frac{s}{2}\sin^2\frac{v(\xi^{\prime})}{2}\right\}q(\xi^{\prime})\notag\\
			&
			\cdot\mathrm{sign}(\xi^{\prime}-\xi)\cos^2\frac{v(\xi)}{2}\cdot q(\xi)d\xi^{\prime},\label{partial_xi Px}
		\end{align}
		which implies
		$$
		\|\partial_{\xi} P_{x}\|_{L^2}\leq C\left(\frac{q^+}{\mu}+\frac{q^+\cdot q^+}{2\mu\sqrt{\mu}}\|\Gamma\|_{L^1}\right)(\|u\|_{L^\infty}\|u\|_{L^2}+\|u\|_{L^\infty}^m\|u\|_{L^2}+\|u\|_{L^2}+\|v\|_{L^\infty}\|v\|_{L^2})<\infty.
		$$
		
		Next, we show that
		$$\frac{\partial P}{\partial u},\quad\frac{\partial P}{\partial v},\quad\frac{\partial P}{\partial q},\quad\frac{\partial P_x}{\partial u},\quad\frac{\partial P_x}{\partial v},\quad\frac{\partial P_x}{\partial q}$$
		are uniformly bounded linear operators from the appropriate spaces into $H^1(\mathbb{R})$. By way of example, the detailed estimates for $\frac{\partial P_x}{\partial u}$ are provided. The other derivatives can be handled analogously.
		
		For $(m,v,q)\in\Omega$, the linear operator $\frac{\partial P_x}{\partial u}$ is defined by
		\begin{align*}
			\left[\frac{\partial P_{x}(u,v,q)}{\partial u}\cdot\widetilde{u}\right](\xi) =&\frac{1}{2\mu}\left(\int_{\xi}^{\infty}-\int_{-\infty}^{\xi}\right)\exp\left\{-\frac{1}{\sqrt{\mu}}\left|\int_{\xi}^{\xi^{\prime}}\cos^{2}\frac{v(s)}{2}\cdot q(s)ds\right|\right\} \\
			& \cdot\left[-(A+\frac{s}{\mu})u(\xi^{\prime})-Bu^{m}(\xi^{\prime})+(2k+\frac{\eta}{\mu})\right]\cos^2\frac{v(\xi^{\prime})}{2}\cdot q(\xi^{\prime})\widetilde{u}(\xi)d\xi^{\prime}.
		\end{align*}
		Thus, 
		\begin{align*}
			\left\|\frac{\partial P_{x}(u,v,q)}{\partial u}\cdot\widetilde{u}\right\|_{L^2} \leq\frac{C}{2\mu}\|\widetilde{u}\|_{L^2}\|\Gamma\|_{L^1} \left(\|u\|_{L^\infty}+\|u\|_{L^\infty}^m+1\right)q^+\leq C\|\widetilde{u}\|_{H^1}.
		\end{align*}
		Similarly, the linear operator $\frac{\partial (\partial_{\xi}P_x)}{\partial u}$ is defined by
		\begin{align*}
			\left[\frac{\partial (\partial_{\xi}P_x)(u,v,q)}{\partial u}\cdot\widetilde{u}\right](\xi) =&-\frac{1}{\mu}\left[-(A+\frac{s}{\mu})u(\xi)-Bu^{m}(\xi)+(2k+\frac{\eta}{\mu})\right]\cos^2\frac{v(\xi)}{2}q(\xi)\widetilde{u}(\xi) \\
			& +\frac{1}{2\mu\sqrt{\mu}}\left(\int_{\xi}^{\infty}-\int_{-\infty}^{\xi}\right)\exp\left\{-\frac{1}{\sqrt{\mu}}\left|\int_{\xi}^{\xi^{\prime}}\cos^{2}\frac{v(s)}{2}\cdot q(s)ds\right|\right\} \\
			& \cdot\left[-(A+\frac{s}{\mu})u(\xi^{\prime})-Bu^{m}(\xi^{\prime})+(2k+\frac{\eta}{\mu})\right]\cos^2\frac{v(\xi^{\prime})}{2}q(\xi^{\prime})\\
			&
			\cdot\mathrm{sign}(\xi^{\prime}-\xi)\cos^2\frac{v(\xi)}{2}\cdot q(\xi)\widetilde{u}(\xi)d\xi^{\prime},
		\end{align*}
		which implies 
		\begin{align*}
			\left\|\frac{\partial (\partial_{\xi}P_x)(u,v,q)}{\partial u}\cdot\widetilde{u}\right\|_{L^2} \leq C\left(\frac{1}{\mu}+\frac{q^+}{2\mu\sqrt{\mu}}\|\Gamma\|_{L^1}\right) \left(\|u\|_{L^\infty}+\|u\|_{L^\infty}^m+1\right)q^+\|\widetilde{u}\|_{L^2}\leq C\|\widetilde{u}\|_{H^1}.
		\end{align*}
		Therefore, $\frac{\partial P_x}{\partial u}$ is a uniformly linear operator from $H^1(\mathbb{R})$ into $H^1(\mathbb{R})$. 
		
		In conclusion, the semi-linear system \eqref{semi-linear system}-\eqref{semi-linear system_initial data} admits a unique solution in $C([0,T];X)$ for some $T>0$.
		
		\textbf{Step 2: Extension to a global solution}
		
		The global extension of the local solution to \eqref{semi-linear system}-\eqref{semi-linear system_initial data} is guaranteed provided
		\begin{equation}\label{extension_5}
			\|q(t)\|_{L^\infty}+\|\frac{1}{q(t)}\|_{L^\infty}+\|v(t)\|_{L^2}+\|v(t)\|_{L^\infty}+\|u(t)\|_{H^1}
		\end{equation}
		stays uniformly bounded on every bounded time interval. The proof is based on the conservation law \eqref{conservative_E(t)} and a contradiction argument. 
		
		Firstly, we claim that
		\begin{equation}\label{u_xi}
			u_\xi=\frac{q}{2}\sin v.
		\end{equation}
		Indeed, 
		\begin{align*}
			\left(\frac{q}{2}\sin v\right)_t=&\frac{q_t}{2}\sin v+\frac{q}{2}\cos v\cdot v_t\\
			=&\frac{q}{2\mu}\sin^2 v\left(\frac{s}{2}-\left(\frac A2+\frac{s}{2\mu}\right)u^2-\frac{B}{m+1}u^{m+1}+\left(2k+\frac{\eta}{\mu}\right)u-P\right)\\
			&+\frac{q}{2\mu}\cos v\left(2\cos^2\frac{v}{2}\left(-\left(\frac A2+\frac{s}{2\mu}\right)u^2-\frac{B}{m+1}u^{m+1}+\left(2k+\frac{\eta}{\mu}\right)u-P\right)-s\sin^2\frac{v}{2}\right)\\
			=&\frac{q}{\mu}\cos^2\frac{v}{2}\left(-\left(\frac A2+\frac{s}{2\mu}\right)u^2-\frac{B}{m+1}u^{m+1}+\left(2k+\frac{\eta}{\mu}\right)u-P\right)+\frac{sq}{2\mu}\sin^2\frac{v}{2}.
		\end{align*}
		Combining this result with \eqref{P}, \eqref{semi-linear system} and \eqref{partial_xi Px}, we obtain
		$$\left(\frac{q}{2}\sin v\right)_t=u_{\xi t},$$
		which, together with $\left(\frac{q}{2}\sin v-u_{\xi}\right)\mid_{t=0}=0$, yields \eqref{u_xi}.
		
		Next, we prove that
		\begin{equation}\label{conservative_E(t)_new variables}
			E(t)=\int_{\mathbb{R}}\left(u^2\cos^2\frac{v}{2}+\mu\sin^2\frac{v}{2}\right)qd\xi=E(0)\triangleq E_0.
		\end{equation}
		In fact, from \eqref{P}-\eqref{Px}, we infer
		$$P_\xi=q\cdot\cos^2\frac{v}{2}\cdot P_x.$$
		Thus, using \eqref{semi-linear system} and \eqref{u_xi}, we have
		\begin{align*}
			&\frac{d}{dt}\int_{\mathbb{R}}\left(u^2\cos^2\frac{v}{2}+\mu\sin^2\frac{v}{2}\right)qd\xi\\
			=&\int_{\mathbb{R}}\left(u^2\cos^2\frac{v}{2}+\mu\sin^2\frac{v}{2}\right)q_td\xi+\int_{\mathbb{R}}2uu_t\cos^2\frac{v}{2}\cdot qd\xi+\frac{1}{2}\int_{\mathbb{R}}(\mu-u^2)\sin v\cdot v_tqd\xi\\
			=&\int_{\mathbb{R}}\sin v\cdot q\left(-\frac{A}{2}u^2-\frac{B}{m+1}u^{m+1}+\left(2k+\frac{\eta}{\mu}\right)u-P\right)-2u\cos^2\frac{v}{2}\cdot qP_xd\xi\\
			=&-\int_{\mathbb{R}}\partial_{\xi}\left(2uP+\frac{A}{3}u^3+\frac{2B}{(m+1)(m+2)}u^{m+2}-\left(2k+\frac{\eta}{\mu}\right)u^2\right)d\xi=0,
		\end{align*}
		which proves \eqref{conservative_E(t)_new variables}.
		
		Therefore, as long as the solution is defined, \eqref{u_xi} and \eqref{conservative_E(t)_new variables} give
		\begin{equation}\label{bdd_u}
			\sup_{\xi\in\mathbb{R}}|u^{2}(t,\xi)| \leq 2\int_\mathbb{R}|uu_\xi|d\xi  \leq2\int_\mathbb{R}|u||\sin\frac{v}{2}\cos\frac{v}{2}|qd\xi\leq \frac{1}{\sqrt{\mu}}E_0.
		\end{equation}
		It follows from \eqref{P}, \eqref{conservative_E(t)_new variables} and \eqref{bdd_u} that
		\begin{align*}
			\|P\|_{L^\infty}\leq&\frac{1}{2\sqrt{\mu}}\left(2|k|+\frac{|\eta|}{\mu}\right)\left\|\int_{-\infty}^{\infty}\exp\left\{-\frac{1}{\sqrt{\mu}}\left|\int_{\xi}^{\xi^{\prime}}\cos^{2}\frac{v(s)}{2}\cdot q(s)ds\right|\right\}u(\xi^{\prime})\cos^2\frac{v(\xi^{\prime})}{2}q(\xi^{\prime})d\xi^{\prime}\right\|_{L^\infty}\\
			&+\frac{1}{2\sqrt{\mu}}\left(\frac{|A|}{2}+\frac{|s|}{2\mu}+\frac{|B|}{m+1}\|u\|_{L^\infty}^{m-1}+\frac{|s|}{2\mu}\right)E_0.
		\end{align*}
		A crucial step is to bound
		$$\left\|\int_{-\infty}^{\infty}\exp\left\{-\frac{1}{\sqrt{\mu}}\left|\int_{\xi}^{\xi^{\prime}}\cos^{2}\frac{v(s)}{2}\cdot q(s)ds\right|\right\}u(\xi^{\prime})\cos^2\frac{v(\xi^{\prime})}{2}q(\xi^{\prime})d\xi^{\prime}\right\|_{L^\infty}.$$
		This can be achieved by employing variable transformations and contradiction arguments inspired by \cite{luo2021globally}. For this purpose, we introduce the following lemma and its corollary regarding variable transformations.
		\begin{lemm}\cite{Zhou2008RealVariables}
			If $g(t)$ is differentiable on a.e. $[a,b]$, $f(x)\in L^1([c,d])$, and $g([a, b])\subset[c,d]$, then we have that $F(g(t))$ is absolutely
			continuous on $[a,b]$ if and only if $f(g(t))g^{\prime}(t)\in L^{1}([a,b])$ and $\int_{g(a)}^{g(b)}f(x)dx=\int_{a}^{b}f(g(t))g^{\prime}(t)dt$, with $F(t)=\int_{c}^{t}f(x)dx$.
		\end{lemm}
		\begin{coro}\cite{Zhou2008RealVariables}\label{cor}
			Assume that $g(t)$ is absolutely continuous on $[a,b]$, $f(x)\in L^1([c,d])$, and $g([a,b])\subset[c,d]$. If $g(t)$ is monotonous
			or $f(x)\in L^\infty([c,d])$, then we have $\int_{g(a)}^{g(b)}f(x)dx=\int_{a}^{b}f(g(t))g^{\prime}(t)dt$.
		\end{coro}
		Define $y(t,\xi)$ as the solutions of 
		\begin{equation}\label{partial_t y(t,xi)}
			y_t(t,\xi)=\frac{s}{\mu}u(t,\xi)-\frac{\eta}{\mu},\quad y(0,\xi)=y_0(\xi).
		\end{equation}
		Then we have
		\begin{equation}\label{y(t,xi)}
			y(t,\xi)=y_0(\xi)+\int_{0}^{t}\left(\frac{s}{\mu}u(t^{\prime},\xi)-\frac{\eta}{\mu}\right)dt^{\prime}.
		\end{equation}
		It follows from \eqref{y0} that $y_0\in L^\infty_{loc}$ is strictly monotonous and satisfies
		$$|y_0(\xi_1)-y_0(\xi_2)|=\left|\int_{y_0(\xi_2)}^{y_0(\xi_1)}1\mathrm{d}x\right|\leq\left|\int_{y_0(\xi_2)}^{y_0(\xi_1)}1+u_{0x}^2\mathrm{d}x\right|\leq|\xi_1-\xi_2|,$$
		which implies that $y_0$ is local Lipschitz continuous.	Consequently, we see from Step 1 and \eqref{y(t,xi)} that there exists a time $T>0$ such that $y(t,\xi)\in H^1_{loc}$ for $t\in[0,T),$ which means $y(t,\xi)$ is local absolutely continuous for $t\in[0,T)$. Now we show that $T$ can be any positive number.
		
		We claim that 
		\begin{equation}\label{y_xi}
			y_\xi=q\cos^2\frac{v}{2}.
		\end{equation}
		Indeed, \eqref{u_xi} and \eqref{partial_t y(t,xi)} give
		$$\partial_t y_\xi=\frac{s}{\mu}u_\xi=\frac{s}{2\mu}q\sin v.$$
		Furthermore, \eqref{semi-linear system} yields
		$$\partial_t\left(q\cos^2\frac{v}{2}\right)=q_t\cos^2\frac{v}{2}-\frac{1}{2}qv_t\sin v=\frac{s}{2\mu}q\sin v.$$
		Since $(y_\xi-q\cos^2\frac{v}{2})\mid_{t=0}=0$, \eqref{y_xi} holds for $t\in[0,T)$.
		
		Consequently, Corollary \ref{cor} implies that, for $t\in[0,T)$ and $[a,b]\subset\mathbb{R}$, 
		\begin{align}
			&\left\|\int_{a}^{b}\exp\left\{-\frac{1}{\sqrt{\mu}}\left|\int_{\xi}^{\xi^{\prime}}\cos^{2}\frac{v(s)}{2}\cdot q(s)ds\right|\right\}|u(\xi^{\prime})|\cos^2\frac{v(\xi^{\prime})}{2}q(\xi^{\prime})d\xi^{\prime}\right\|_{L^\infty}\notag\\
			\leq&\|u\|_{L^{\infty}}\left\|\int_{a}^{b}\exp\left\{-\frac{1}{\sqrt{\mu}}|y(\xi)-y(\xi^{\prime})|\right\}y_{\xi^{\prime}}d\xi^{\prime}\right\|_{L^{\infty}}\notag\\
			\leq&\|u\|_{L^{\infty}}\int_{-\infty}^{+\infty}e^{-\frac{1}{\sqrt{\mu}}|s|}ds.\label{left hand side}
		\end{align}
		When $a\to-\infty$ and $b\to+\infty$, the monotone convergence theorem guarantees the existence of the limit on the left-hand side of \eqref{left hand side}. Thus, 
		\begin{equation}
			\|P\|_{L^\infty}\leq C\left(E_0^{\frac{1}{2}}+E_0+E_0^{\frac{m+1}{2}}\right).
		\end{equation}
		Similarly, we have
		\begin{equation}\label{bdd_Px_Linfty}
			\|P_x\|_{L^\infty}\leq C\left(E_0^{\frac{1}{2}}+E_0+E_0^{\frac{m+1}{2}}\right).
		\end{equation}
		From \eqref{semi-linear system}, we see that
		$$
		|q_t|\leq C\left(1+E_0^{\frac{1}{2}}+E_0+E_0^{\frac{m+1}{2}}\right)q,
		$$
		which implies
		\begin{equation}\label{bdd_q(t)}
			\exp\left\{-C\left(1+E_0^{\frac{1}{2}}+E_0+E_0^{\frac{m+1}{2}}\right)t\right\}\leq q(t) \leq\exp\left\{C\left(1+E_0^{\frac{1}{2}}+E_0+E_0^{\frac{m+1}{2}}\right)t\right\}.
		\end{equation}
		Therefore, \eqref{y_xi} yields
		$$
		\|y_\xi\|_{L^\infty}\leq\|q\|_{L^\infty}\leq\exp\left\{C\left(1+E_0^{\frac{1}{2}}+E_0+E_0^{\frac{m+1}{2}}\right)T\right\}.
		$$
		In addition, it follows from \eqref{bdd_u} and \eqref{y(t,xi)} that
		\begin{equation}\label{bdd_y(t,xi)}
			y_0(\xi)-C\left(1+E_0^{\frac{1}{2}}\right)t\leq y(t,\xi)\leq y_0(\xi)+C\left(1+E_0^{\frac{1}{2}}\right)t,
		\end{equation}
		which means $y(t,\xi)\in L^\infty_{loc}$ for $t\in[0,T]$. Hence, we have $y(t,\xi)\in H^1_{loc}$ for $t\in[0,T]$. A contradiction argument shows that $T$ cannot be bounded above, which means that the results above hold for all $t\geq0$.
		
		Using \eqref{semi-linear system}, we derive
		$$\|v(t)\|_{L^\infty}\leq B(1+t)$$
		and 
		$$
		\frac{d}{dt}\|u\|_{L^2}^2=\int_{\mathbb{R}}2uu_td\xi\leq2\|u\|_{L^2}\|P_x\|_{L^2},
		$$
		where $B=B(E_0)>0$. Let $\kappa$ be the right-hand side of \eqref{bdd_q(t)} and 
		\begin{equation*}
			\widetilde{\Gamma}(\zeta)\triangleq\min\left\{1,\exp\left[\frac{1}{\sqrt{\mu}}\left(\frac{9E_0}{\mu}-\frac{|\zeta|}{\kappa}\right)\right]\right\}.
		\end{equation*}
		Then we have $\kappa^{-1}\leq q(t)\leq\kappa$ and
		\begin{equation*}
			\|\widetilde{\Gamma}\|_{L^1}\leq 4\kappa\sqrt{\mu}+\frac{36\kappa E_0}{\mu}<\infty.
		\end{equation*}
		Consequently,
		\begin{align*}
			\|P_x\|_{L^2}\leq\frac{1}{2\mu}\|\widetilde{\Gamma}\|_{L^1}\kappa^{\frac{1}{2}}\left(\left|\frac{A}{2}+\frac{s}{2\mu}\right|E_0+\left|\frac{B}{m+1}\right|E_0^{\frac{m+1}{2}}+\left|2k+\frac{\eta}{\mu}\right|E_0^{\frac{1}{2}}+\left|\frac{s}{2}\right|\frac{1}{\sqrt{\mu}}E_0^{\frac{1}{2}}\right)\leq C,
		\end{align*}
		where $C=C(E_0,\kappa)>0$. The estimate for $\|P\|_{L^2}$ is entirely similar. Thus, we obtain
		$$
		\frac{d}{dt}\|u\|_{L^2}^2\leq C(E_0,\kappa)\|u\|_{L^2},
		$$
		which establishes the boundedness of $\|u(t)\|_{L^2}$ for $t$ in bounded intervals. Similarly, 
		\begin{align*}
			\frac{d}{dt}\|v\|_{L^2}^2\leq&2\|v\|_{L^2}\|v_t\|_{L^2}\\
			\leq& C\|v\|_{L^2}\left(\|u\|_{L^2}\|u\|_{L^\infty}+\|u\|_{L^2}\|u\|_{L^\infty}^{m}+\|u\|_{L^2}+\|P\|_{L^2}+\|v\|_{L^2}\|v\|_{L^\infty}\right).
		\end{align*}
		From the previous bounds, we deduce that $\|v(t)\|_{L^2}$ is bounded for $t$ in bounded intervals. It follows from \eqref{u_xi} that
		$$\|u_\xi\|_{L^2}=\|\frac{q}{2}\sin v\|_{L^2}\leq\frac{\kappa}{2}\|v\|_{L^2},$$
		which guarantees the boundedness of $\|u_\xi(t)\|_{L^2}$ on bounded intervals of time. Hence, we complete the proof that the solution of \eqref{semi-linear system}-\eqref{semi-linear system_initial data} can be extended globally in time.
	\end{proof}
	
	For future use, we state an important property of the above solutions, which is the very reason for requiring $s\neq0$.
	
	\begin{lemm}\label{lemma_N}
		Let $s\neq0$, 
		$$\mathcal{N}\triangleq\left\{t\geq0:\mathrm{meas}\{\xi\in \mathbb{R}:v(t,\xi)=-\pi\}>0\right\}.$$
		Then we have
		$$\mathrm{meas}(\mathcal{N})=0.$$
	\end{lemm}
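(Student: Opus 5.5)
The plan is the standard argument of Bressan--Constantin: at points where $v=-\pi$ the equation for $v$ forces $v_t$ to be a nonzero constant, so each characteristic spends only a null set of times at $v=-\pi$. Fubini then converts this pointwise statement into the measure statement.

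\emph{Step 1: the sign of $v_t$ on $\{v=-\pi\}$.} At any $(t,\xi)$ with $v(t,\xi)=-\pi$ we have $1+\cos v=0$ and $\sin^2\frac v2=1$. The second equation of \eqref{semi-linear system} then gives
\[
v_t(t,\xi)=-\frac{s}{\mu}\neq0 .
\]
This is exactly where $s\neq0$ is used. Since the right-hand side of the $v$-equation is continuous in $v$, with $u$ and $P$ bounded by \eqref{bdd_u} and the $L^\infty$ bound on $P$, there exists $\delta>0$, depending only on $E_0$ and the parameters, such that $|v_t|\ge \frac{|s|}{2\mu}$ whenever $|v+\pi|<\delta$. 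Here I use that $v$ can be treated as a continuous function of $t$ for a.e. fixed $\xi$; $v(\cdot,\xi)$ is Lipschitz in $t$ with bound $B$ from the global estimates.

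\emph{Step 2: each characteristic meets $v=-\pi$ at finitely many times.} Fix $\xi$ and $T>0$. On $[0,T]$ the function $t\mapsto v(t,\xi)$ is Lipschitz, with $v_t$ of constant sign and bounded away from zero whenever $v$ is $\delta$-close to $-\pi$. Hence every visit to $-\pi$ is a transversal crossing, and consecutive crossings are separated in time by at least roughly $2\delta/\sup|v_t|$. So the set $\{t\in[0,T]: v(t,\xi)=-\pi\}$ is finite and in particular has one-dimensional measure zero. (Using $\|v\|_{L^\infty}\le B(1+t)$, I would also handle the other branches $v=-\pi+2k\pi$ if needed, but only $-\pi$ matters here.)

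\emph{Step 3: Fubini.} Consider the measurable set $S=\{(t,\xi)\in[0,T]\times\mathbb R: v(t,\xi)=-\pi\}$. By Step 2 each $\xi$-section has zero $t$-measure, so $\mathrm{meas}_2(S)=0$. Fubini in the other order then gives $\mathrm{meas}\{\xi: v(t,\xi)=-\pi\}=0$ for a.e. $t\in[0,T]$. Letting $T\to\infty$ yields $\mathrm{meas}(\mathcal N)=0$.

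\emph{Main obstacle.} The delicate point is regularity: $v$ is only an element of $C([0,T];L^2\cap L^\infty)$, so "$v(t,\xi)$ for fixed $\xi$" must be justified. I would handle this by viewing the system as ODEs in $t$ for a.e. $\xi$: $u$, $P$ and $P_x$ are continuous in $(t,\xi)$ because they lie in $H^1$, so for a.e. $\xi$ the function $v(\cdot,\xi)$ solves the scalar ODE with continuous coefficients and is $C^1$ in $t$. This is what makes Step 2 rigorous. Measurability of $S$ then follows from measurability of $v$ on $[0,T]\times\mathbb R$.
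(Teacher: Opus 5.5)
Your proposal is correct and follows the same route as the paper: at $v=-\pi$ the $v$-equation forces $v_t=-s/\mu\neq0$, so for each fixed $\xi$ the time set $\{t: v(t,\xi)=-\pi\}$ is null, and Fubini then gives $\mathrm{meas}(\mathcal N)=0$. The differences are cosmetic. The paper gets the per-$\xi$ null set from the fact that an absolutely continuous function satisfies $v_t=0$ a.e.\ on a level set, whereas you use transversality; in fact, since $v_t$ has a fixed sign on the whole $\delta$-band, each characteristic hits $-\pi$ at most once, which makes your ``separation of crossings'' remark unnecessary. You also justify the a.e.-$\xi$ regularity of $v(\cdot,\xi)$ more carefully than the paper does.
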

	\begin{proof}
		When $v=-\pi$, we have $1+\cos v=0$ and $v_t=-\frac{s}{\mu}$ by \eqref{semi-linear system}. Given the bounds on $\|u\|_{L^\infty}$ and $\|P\|_{L^\infty}$, there exists a constant $\delta>0$ such that,  whenever $1+\cos v<\delta$, we have 
		$$v_t<-\frac{s}{2\mu}\ \mathrm{for}\ s>0, \quad \mathrm{and}\quad v_t>-\frac{s}{2\mu}\ \mathrm{for}\ s<0.$$
		Since the map $t\mapsto v(t,\xi)$ is absolutely continuous at each fixed $\xi\in\mathbb{R}$, we obtain $v_t=0$ a.e. on $\{v(t,\xi)=-\pi\}$. Hence, a contradiction argument shows that $\mathrm{meas}(\mathcal{N})=0$.
	\end{proof}
	
	\subsection{Global conservative solutions for the original equation}
	In this subsection, we use the global solutions of \eqref{semi-linear system}-\eqref{semi-linear system_initial data} to construct a global conservative solution for the original equation \eqref{eq1}.
	
	\begin{theo}
		Let $s\neq0,\ u_0\in H^1(\mathbb{R})$. Then the Cauchy problem \eqref{eq1} has a global conservative solution in the sense of Definition \ref{conservative}.
	\end{theo}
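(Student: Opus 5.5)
We follow the Bressan--Constantin scheme: go back from the global solution $(u,v,q)(t,\xi)$ of \eqref{semi-linear system}--\eqref{semi-linear system_initial data} to the Eulerian variables $(t,x)$ through the characteristic map $y(t,\xi)$ defined in \eqref{partial_t y(t,xi)}. Set
\[
u(t,x)\triangleq u(t,\xi)\quad\text{whenever } x=y(t,\xi),
\]
and define the measures $\mu_{(t)}$ as the push-forward of $\sin^2\frac{v}{2}\,q\,d\xi$ under $\xi\mapsto y(t,\xi)$, that is,
\[
\mu_{(t)}(\mathcal{A})=\int_{\{\xi:\,y(t,\xi)\in\mathcal{A}\}}\sin^2\frac{v(t,\xi)}{2}\,q(t,\xi)\,d\xi .
\]

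\textbf{Well-posedness of the definition.} By \eqref{y_xi}, $y_\xi=q\cos^2\frac v2\ge0$, so $y(t,\cdot)$ is nondecreasing. By \eqref{bdd_y(t,xi)}, $y(t,\xi)\to\pm\infty$ as $\xi\to\pm\infty$, so $y(t,\cdot)$ maps onto $\mathbb{R}$. If $y(t,\xi_1)=y(t,\xi_2)$, then $\cos\frac v2=0$ a.e. on $[\xi_1,\xi_2]$, and \eqref{u_xi} gives $u_\xi=\frac q2\sin v=0$ there. Hence $u(t,x)$ is well defined.

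\textbf{Regularity and energy.} By \eqref{u_xi}, \eqref{y_xi} and Corollary \ref{cor}, changing variables gives
\[
\int_{\mathbb{R}}\bigl(u^2+\mu u_x^2\bigr)\,dx\le\int_{\mathbb{R}}\Bigl(u^2\cos^2\frac v2+\mu\sin^2\frac v2\Bigr)q\,d\xi=E_0,
\]
using $u_x=\tan\frac v2$ wherever $\cos\frac v2\neq0$. Equality holds exactly when $\mathrm{meas}\{v(t,\cdot)=-\pi\}=0$, so by Lemma \ref{lemma_N} it holds for a.e.\ $t$. This gives $u(t)\in H^1$ with a uniform bound. Since $u_t=-P_x$ is bounded by \eqref{bdd_Px_Linfty} and characteristics move with bounded speed, $u$ is Lipschitz in $t$ along characteristics. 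Combined with the $H^1$ bound, this gives Hölder continuity of order $1/2$ in $x$ and Hölder continuity in $t$ on $[0,T]\times\mathbb{R}$. The same change of variables in \eqref{P}, \eqref{Px} shows that these coincide with the Eulerian $P,P_x$ built from $u(t,\cdot)$, away from the null set $\mathcal N$. Here one must check that $\sin^2\frac v2\,q$ integrates to $\int u_x^2$ plus the singular part, and that the singular part only matters for $t\in\mathcal N$, which has measure zero.

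\textbf{Lipschitz continuity into $L^2$ and \eqref{u}.} For $\tau$ small, estimate
\[
\|u(t+\tau)-u(t)+\tau\bigl[(\tfrac s\mu u-\tfrac\eta\mu)u_x+P_x\bigr](t)\|_{L^2}=o(\tau)
\]
for a.e.\ $t$, following Bressan--Constantin. Split $x=y(t,\xi)$ into the set of $\xi$ where $|v|$ stays away from $\pi$ on $[t,t+\tau]$; there $u$ is locally Lipschitz in $x$ and the characteristic computation gives the first-order expansion. On the complementary set, whose $x$-measure is $o(1)$ by the energy bound and Lemma \ref{lemma_N}, use $\|u_x\|_{L^2}$ bounds. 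Uniform boundedness of $u_t$ along characteristics together with $|y_t|\le C$ gives the global Lipschitz bound $\|u(t)-u(s)\|_{L^2}\le C|t-s|$. I expect this step, the $o(\tau)$ control near the singular set $v=\pm\pi$, to be the main technical obstacle. I would handle it with the measure estimate $\mathrm{meas}\{|v|\ge\pi/2\}\le C\|v\|_{L^2}^2$ and the fact that for $\xi$ with $\cos\frac v2$ bounded below, $u_x$ is bounded.

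\textbf{Balance law \eqref{w}.} Given $\varphi\in\mathcal C^1_c$, change variables $x=y(t,\xi)$:
\[
\int\!\!\int\bigl[\varphi_t+(\tfrac s\mu u-\tfrac\eta\mu)\varphi_x\bigr]\,d\mu_{(t)}\,dt
=\int\!\!\int\bigl[\varphi_t+y_t\varphi_x\bigr](t,y(t,\xi))\,\sin^2\frac v2\,q\,d\xi\,dt .
\]
The bracket is $\frac{d}{dt}\varphi(t,y(t,\xi))$, so integrating by parts in $t$ moves the derivative onto $\sin^2\frac v2\,q$. From \eqref{semi-linear system},
\[
\Bigl(\sin^2\frac v2\,q\Bigr)_t=\frac1\mu\Bigl(-\bigl(\tfrac A2+\tfrac s{2\mu}\bigr)u^2-\tfrac B{m+1}u^{m+1}+\bigl(2k+\tfrac\eta\mu\bigr)u-P\Bigr)\sin v\,q .
\]
Since $\sin v\,q=2u_x\cos^2\frac v2\,q=2u_x\,y_\xi$, converting back to $dx$ produces exactly the source term in \eqref{w}. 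The boundary term at $t=0$ is $\int u_{0x}^2\varphi(0,x)\,dx$, because $q(0)=1$. Weak continuity of $t\mapsto\mu_{(t)}$ follows from continuity of $y,v,q$ in $t$ with values in $L^\infty_{loc}$, $L^2$ and $L^\infty$, respectively. Finally, since Lemma \ref{lemma_N} gives $\mathrm{meas}\{v(t,\cdot)=-\pi\}=0$ for a.e.\ $t$, the measure $\mu_{(t)}$ is absolutely continuous for a.e.\ $t$. For every $t$, its absolutely continuous part has density $u_x^2$.
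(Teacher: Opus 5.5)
Your proposal follows the paper's proof in every step but one. The shared parts are the definition of $u(t,x)$ and of $\mu_{(t)}$ as a push-forward, the well-definedness check, the energy bound through $y_\xi=q\cos^2\frac v2$, the H\"older argument, the identification of $P,P_x$ off $\mathcal N$, and the balance-law computation via $(\sin^2\frac v2\,q)_t=\frac1\mu(\cdots)\sin v\,q$ and $\sin v\,q=2u_x y_\xi$. You diverge in proving \eqref{u}, and there your sketch does not close.

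You aim for a strong expansion $\|u(t+\tau)-u(t)+\tau[\cdots]\|_{L^2}=o(\tau)$. On the set where $\cos\frac v2\approx 0$ you invoke only three things: the smallness of its $x$-measure, the bound $\mathrm{meas}\{|v|\ge\pi/2\}\le C\|v\|_{L^2}^2$ (which is not small at all), and bounds on $\|u_x\|_{L^2}$. These make both $\frac1\tau\|u(t+\tau)-u(t)\|_{L^2}$ and $\|(\frac s\mu u-\frac\eta\mu)u_x\|_{L^2}$ over that set $O(1)$, not $o(1)$.

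What you need is that the energy $\int\sin^2\frac v2\,q\,d\xi$ carried by $\{\cos\frac v2\approx0\}$ tends to zero, at times near $t$ and on $C\tau$-neighbourhoods. This is where $t\notin\mathcal N$ (absolute continuity of $\mu_{(t)}$) must enter, together with a Lebesgue-density argument. Lemma \ref{lemma_N} is used there, not to make a measure small. Even on your ``good'' set, $u_x=\tan\frac v2$ is bounded only on a measurable set, not on neighbourhoods of its points. So the first-order expansion there also needs an averaged ($L^2$-translation) argument rather than local Lipschitz continuity.

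The paper sidesteps all of this. Once $\|u(\tau+h)-u(\tau)\|_{L^2}\le Ch$ is known, reflexivity of $L^2$ gives differentiability for a.e.\ $t$ for free, and it only remains to identify the derivative. The paper does this via $\frac{d}{dt}u(t,y(t,\xi))=-P_x(t,\xi)$ and $P_x(t,\xi)=P_x(t,y(t,\xi))$ for $t\notin\mathcal N$. A clean way to make this precise: for $\phi\in C_c^\infty$, differentiate $\int u(t,x)\phi(x)\,dx=\int u\,\phi(y)\,q\cos^2\frac v2\,d\xi$ in $t$. Using $\partial_t y_\xi=\frac s\mu u_\xi$ and one integration by parts in $\xi$, this yields $\int\bigl(-(\frac s\mu u-\frac\eta\mu)u_x-P_x\bigr)\phi\,dx$ for every $t\notin\mathcal N$, which determines the $L^2$-derivative wherever it exists.

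Separately, your justification of the Lipschitz bound is too weak as stated. A uniform bound on $u_t=-P_x$ along characteristics, integrated over all of $\mathbb{R}$, gives nothing in $L^2$. As in the paper, the source term needs $\sup_t\|P_x(t)\|_{L^2}<\infty$ (proved in Step 2 of the global existence theorem) together with $dx=q\cos^2\frac v2\,d\xi\le\kappa\,d\xi$. The transport term needs $\|u_x(\tau+h)\|_{L^2}$.
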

	\begin{proof}
		Given a global solution $(u,v,q)$ of \eqref{semi-linear system}-\eqref{semi-linear system_initial data}, we set
		\begin{equation}\label{def_u(t,x)}
			u(t,x)\triangleq u(t,\xi),\quad\mathrm{if}\quad x=y(t,\xi),
		\end{equation}
		with $y(t,\xi)$ given by \eqref{partial_t y(t,xi)}. Now we check that \eqref{def_u(t,x)} is well defined. From \eqref{y0} and \eqref{bdd_y(t,xi)}, we see
		$$\lim_{\xi\to\pm\infty}y(t,\xi)=\pm\infty.$$
		Next, \eqref{y_xi} implies that the map $\xi\mapsto y(t,\xi)$ is non-decreasing. If $\xi_1<\xi_2$ but $y(t,\xi_1)=y(t,\xi_2)$, then
		$$\int_{\xi_1}^{\xi_2}q(t,s)\cos^2\frac{v(t,s)}{2}ds=\int_{\xi_1}^{\xi_2}y_\xi(t,s)ds=0,$$
		which yields $\cos\frac{v}{2}=0$ throughout the interval of integration. Thus, \eqref{u_xi} gives
		$$u(t,\xi_2)-u(t,\xi_1)=\int_{\xi_1}^{\xi_2}\frac{q(t,s)}{2}\sin v(t,s)ds=0,$$
		which proves that \eqref{def_u(t,x)} is well defined for all $t\geq0$ and $x\in\mathbb{R}$.
		
		According to the definition \eqref{def_u(t,x)}, we have
		\begin{equation*}
			u_x(t,x)=\frac{\sin v(t,\xi)}{1+\cos v(t,\xi)}\quad\mathrm{if}\quad x=y(t,\xi),\quad\cos v(t,\xi)\neq-1,
		\end{equation*}
		which, together with \eqref{y_xi}, implies
		\begin{align*}
			&\int_{\mathbb{R}}u^{2}(t,x)+\mu u_{x}^{2}(t,x)dx \\
			=&\int_{\{\cos v>-1\}}\left(u^2(t,\xi)\cos^2\frac{v(t,\xi)}{2}+\mu\sin^2\frac{v(t,\xi)}{2}\right)q(t,\xi)d\xi \\
			\leq& E_{0}.
		\end{align*}
		Since $H^1\hookrightarrow C^{0,\frac{1}{2}}$, we see that $u$ is uniformly H\"{o}lder continuous with exponent $\frac{1}{2}$ as a function of $x$. According to \eqref{semi-linear system} and \eqref{bdd_Px_Linfty}, the map $t\mapsto u(t,y(t))$ is uniformly Lipschitz continuous along each characteristic curve $t\mapsto y(t)$. Hence, $u=u(t,x)$ is globally H\"{o}lder continuous.
		
		Now we claim that the map $t\mapsto u(t,\cdot)$ is Lipschitz continuous with values in $L^2(\mathbb{R})$. Indeed, for any interval $[\tau,\tau+h]$ and a given point $x$, we choose $\xi\in\mathbb{R}$ such that the characteristic $t\mapsto y(t,\xi)$ passes through $(\tau,x)$. It follows from \eqref{semi-linear system} and \eqref{bdd_u} that 
		$$\|y_t\|_{L^\infty}\leq\frac{|s|E_0^{\frac{1}{2}}}{\mu^{\frac{5}{4}}}+\frac{|\eta|}{\mu}\triangleq\widetilde{C}$$
		and 
		\begin{align*}
			&\left|u(\tau+h,x)-u(\tau,x)\right|\\
			\leq&\left|u(\tau+h,x)-u\left(\tau+h,y(\tau+h,\xi)\right)\right|  +\left|u\left(\tau+h,y(\tau+h,\xi)\right)-u(\tau,x)\right| \\
			\leq&\sup_{|y-x|\leq \widetilde{C}h}\left|u(\tau+h,y)-u(\tau+h,x)\right|+\int_{\tau}^{\tau+h}\left|P_{x}(t,\xi)\right|dt.
		\end{align*}
		Integrating over $\mathbb{R}$ yields
		\begin{align*}
			\int_\mathbb{R}\left|u(\tau+h,x)-u(\tau,x)\right|^2dx\leq&2\int_\mathbb{R}\left(\int_{x-\widetilde{C}h}^{x+\widetilde{C}h}\left|u_x(\tau+h,y)\right|dy\right)^2dx\\
			&+2\int_\mathbb{R}\left(\int_\tau^{\tau+h}\left|P_x(t,\xi)\right|dt\right)^2q(\tau,\xi)\cos^2\frac{v(\tau,\xi)}{2}d\xi\\
			\leq&8\widetilde{C}^2h^2\|u_x(\tau+h)\|_{L^2}^2+2h\|q(\tau)\|_{L^\infty}\int_\tau^{\tau+h}\|P_x(t)\|_{L^2}^2dt\\
			\leq&Ch^2,
		\end{align*}
		thus proving the claim.
		
		Since $L^2(\mathbb{R})$ is a reflexive space, the left-hand side of \eqref{u} is well defined for a.e. $t\geq0$ and the right-hand side of \eqref{u} also lies in $L^2(\mathbb{R})$ for a.e. $t\geq0$. From \eqref{semi-linear system}, we observe that for any $t>0$ and $\xi\in\mathbb{R}$,
		$$\frac{d}{dt}u\left(t,y(t,\xi)\right)=-P_x(t,\xi),$$
		where $P_x$ is the function defined at \eqref{Px}. Moreover, for every $t\notin\mathcal{N}$, the map $\xi\mapsto y(t,\xi)$ is one-to-one and a
		change of variables yields
		\begin{align*}
			P_x(t,\xi) 
			=&\frac{1}{2\mu}\left(\int_{y(t,\xi)}^{\infty}-\int_{-\infty}^{y(t,\xi)}\right)\exp\left\{-\frac{1}{\sqrt{\mu}}\left|y(t,\xi)-x\right|\right\}\\
			&\cdot\left(-\left(\frac{A}{2}+\frac{s}{2\mu}\right)u^2+\frac{s}{2}u_x^2-\frac{B}{m+1}u^{m+1}+\left(2k+\frac{\eta}{\mu}\right)u\right)(t,x)dx\\
			=&P_x(t,y(t,\xi)).
		\end{align*}
		Since Lemma \ref{lemma_N} guarantees $\mathrm{meas}(\mathcal{N})=0$, the identity \eqref{u} holds for almost every $t\geq0$.
		
		Finally, we consider the balance law. Let $\left\{\mu_{(t)},\ t\geq0\right\}$ be the bounded Radon measures defined by
		\begin{equation}
			\mu_{(t)}\left([a,b]\right)\triangleq\int_{\{\xi;y(t,\xi)\in[a,b]\}}\sin^2\frac{v(t,\xi)}{2}\cdot q(t,\xi)d\xi.
		\end{equation}
		If $t\notin\mathcal{N}$, then the measure $\mu_{(t)}$ is absolutely continuous, possessing density $u_x^2$ with respect to the Lebesgue measure. Moreover, for $t\in\mathcal{N}$, $\mu_{(t)}$ is the weak limit of $\mu_{(t^\prime)}$, as $t^\prime\to t,\ t^\prime\notin\mathcal{N}$. It follows from \eqref{semi-linear system} and \eqref{u_xi} that for any test function $\varphi\in\mathcal{C}_c^1(\mathbb{R}^+\times\mathbb{R})$,
		\begin{align*}
			&-\int_{0}^{\infty}\int_\mathbb{R}\varphi_t+\varphi_x\left(\frac{s}{\mu}u-\frac{\eta}{\mu}\right)d\mu_{(t)}dt\\
			=&-\int_{0}^{\infty}\int_\mathbb{R}\partial_t(\varphi(t,y(t,\xi)))\cdot\sin^2\frac{v(t,\xi)}{2}\cdot q(t,\xi)d\xi dt\\
			=&\frac{1}{\mu}\int_{0}^{\infty}\int_\mathbb{R}\varphi(t,y(t,\xi))\cdot\left[\left(-\left(\frac{A}{2}+\frac{s}{2\mu}\right)u^2-\frac{B}{m+1}u^{m+1}+\left(2k+\frac{\eta}{\mu}\right)u-P\right)\sin v\cdot q\right](t,\xi) d\xi dt\\
			&+\int_{\mathbb{R}}\varphi(0,y(0,\xi))\cdot\sin^2\frac{v(0,\xi)}{2}\cdot q(0,\xi)d\xi\\
			=&\frac{2}{\mu}\int_{0}^{\infty}\int_\mathbb{R}\left(-\left(\frac{A}{2}+\frac{s}{2\mu}\right)u^2-\frac{B}{m+1}u^{m+1}+\left(2k+\frac{\eta}{\mu}\right)u-P\right)u_x\varphi  dxdt+\int_{\mathbb{R}}\varphi(0,x)u_{0x}^{2}dx.
		\end{align*}	
		%Since Lemma \ref{lemma_N} gives $\mathrm{meas}(\mathcal{N})=0$, \eqref{ux2} yields the balance law \eqref{eqw}, which can be formulated more precisely as
		
		Hence, we complete the proof that $u$ is a global conservative solution of \eqref{eq1} in the sense of Definition \ref{conservative}.
	\end{proof}
	\section{Uniqueness}\label{Uniqueness}
	In this section, we establish the following theorem on the uniqueness of global conservative solutions for the Cauchy problem \eqref{eq1}.
	
	\begin{theo}\label{thm_uniqueness}
		Let $u(t,x)$ be a global conservative solution of \eqref{eq1} with $s\neq0$ in the sense of Definition \ref{conservative}. Then $u(t,x)$ is unique.
	\end{theo}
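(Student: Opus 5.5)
The plan follows the Bressan--Chen--Zhang approach to uniqueness of conservative Camassa--Holm solutions \cite{bressan2014uniqueness}. Given an arbitrary global conservative solution $u$ with associated measures $\{\mu_{(t)}\}$, we build Lagrangian variables from $u$ alone. We then show they solve the semi-linear system \eqref{semi-linear system}--\eqref{semi-linear system_initial data}. By Theorem 3.1 that solution is unique, so $u$ is uniquely determined through \eqref{def_u(t,x)}.

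\textbf{Step 1: a unique characteristic through every label.} For $t\ge0$ and $\beta\in\mathbb{R}$, the characteristic $t\mapsto x(t)$ labelled by $\beta$ is defined implicitly by
\begin{equation*}
x(t)+\mu_{(t)}\big((-\infty,x(t)]\big)\le \beta\le x(t)+\mu_{(t)}\big((-\infty,x(t)]\big)+\mu_{(t)}(\{x(t)\}),
\end{equation*}
so that $\beta(0,x)=x+\int_{-\infty}^x u_{0x}^2dx'$ at $t=0$. Using the weak balance law \eqref{w} with test functions approximating indicators of half-lines, and the conservation law \eqref{eq1}, I would show formally that $\beta(t)=x(t)+\mu_{(t)}((-\infty,x(t)])$ satisfies
\begin{equation*}
\frac{d}{dt}\beta=\frac{s}{\mu}u-\frac{\eta}{\mu}+\int_{-\infty}^{x(t)}\frac{2}{\mu}\Big(-\big(\tfrac A2+\tfrac{s}{2\mu}\big)u^2-\tfrac{B}{m+1}u^{m+1}+\big(2k+\tfrac{\eta}{\mu}\big)u-P\Big)u_x\,dx.
\end{equation*}
Rewritten in terms of $\beta$, this is an integral equation whose right-hand side is Lipschitz in $\beta$; the Lipschitz constant uses the bounds on $\|u\|_{L^\infty}$, $\|P\|_{L^\infty}$ and $\|P_x\|_{L^\infty}$ from $E_0$. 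A fixed-point argument then yields a unique characteristic $x(t;\bar\beta)$ for each initial label. This characteristic also satisfies $\dot x=\frac{s}{\mu}u(t,x)-\frac{\eta}{\mu}$.

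\textbf{Step 2: $u$ along characteristics.} Next, integrating \eqref{u} against test functions concentrated near $x(t)$, I would show that $\frac{d}{dt}u(t,x(t))=-P_x(t,x(t))$ for a.e.\ $t$. The same argument gives the corresponding Lipschitz dependence of $x$ and $u$ on $\bar\beta$.

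\textbf{Step 3: recover the semi-linear system.} We define $y(t,\bar\beta)=x(t;\bar\beta)$ and $u(t,\bar\beta)=u(t,y)$. Since $y_{\bar\beta}\ge0$, the variables $v$ and $q$ can be set from $y_{\bar\beta}=q\cos^2\frac v2$ and $u_{\bar\beta}=\frac q2\sin v$, with $q\sin^2\frac v2$ representing the measure density. This is consistent because $y_{\bar\beta}+q\sin^2\frac v2=q$ and $\beta$ differs from $\xi$ only by the $t$-independent normalization at time $0$.

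We then differentiate the ODEs for $y$ and $u$ in $\bar\beta$. This step needs the Lipschitz regularity in $\bar\beta$ and an argument at a.e.\ $(t,\bar\beta)$ excluding the time set of Lemma \ref{lemma_N}; that lemma is where $s\neq0$ enters, since singular parts of $\mu_{(t)}$ occur only for $t$ in a null set. It should show that $(u,v,q)$ satisfies \eqref{semi-linear system}, with $P$ and $P_x$ coinciding with \eqref{P} and \eqref{Px} after the change of variables (Corollary \ref{cor}). The initial data are those of \eqref{semi-linear system_initial data}. Uniqueness of the semi-linear solution from Theorem 3.1 then gives $u$ uniquely via \eqref{def_u(t,x)}.

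\textbf{Main obstacle.} The hardest part is Step 1 together with the differentiation in Step 3. We must show that the implicit equation for $\beta(t)$ holds rigorously for a merely measure-valued solution, using mollified test functions in \eqref{w}, and that the resulting right-hand side is Lipschitz in $\beta$ despite the higher-power term $u^{m+1}$ and the linear term. The uniform $L^\infty$ bounds from the existence proof should control these contributions.
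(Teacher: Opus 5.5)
Your outline follows the paper's route, the Bressan--Chen--Zhang scheme. It uses the label $\beta$, the integral equation \eqref{beta(t)} with $G$ Lipschitz in $\beta$, the identity \eqref{u_Px} along the resulting characteristic, differentiation in $\bar\beta$ along good characteristics, and uniqueness for a semi-linear system. Closing the system in $(u,v,q)$ with $q=\partial_{\bar\beta}\beta$ and invoking the well-posedness theorem for \eqref{semi-linear system}--\eqref{semi-linear system_initial data} is a sensible variant of the paper's \eqref{semi_unique}, which leaves this Jacobian factor implicit. You would still need to check that the a.e.-defined Lagrangian variables stay in a bounded set $\Omega\subset X$ and satisfy the system in integral form. (Your implicit definition of the label is off by the atom; it should read $x+\mu_{(t)}((-\infty,x))\le\beta\le x+\mu_{(t)}((-\infty,x])$.)

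The step that fails as you justify it is the appeal to Lemma \ref{lemma_N}. That lemma is about the solution of \eqref{semi-linear system}--\eqref{semi-linear system_initial data}, i.e.\ the solution constructed in Section \ref{Existence}. For an arbitrary $u$ satisfying Definition \ref{conservative} you do not yet know that it coincides with that one, since that is the claim. So you cannot infer that $\mu_{(t)}$ is absolutely continuous for a.e.\ $t$. Yet this is exactly what you need: $y_\beta(t,\beta(t,\bar\beta))>0$ for a.e.\ $t$ (for a.e.\ $\bar\beta$) is what lets you identify $u_\beta/y_\beta$ with $u_x$ and derive the $v$-equation. Where $y_{\bar\beta}=0$ on a set of positive measure, your relations only give $v=\pi$, and nothing forces $v_t=-\frac{s}{\mu}$.

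Definition \ref{conservative} as written does not supply this. Take the Camassa--Holm case ($\mu=s=1$, $A=-3$, $k=\eta=B=0$) and a peakon--antipeakon pair colliding at $(t_b,0)$. Then $u(t)\to0$ in $L^2\cap L^\infty$ and $u_x^2(t)\,dx\rightharpoonup E_0\delta_0$ as $t\uparrow t_b$. Continue it by $u\equiv0$ and $\mu_{(t)}=E_0\delta_0$ for $t\ge t_b$. Before $t_b$ this is an honest conservative solution, and the boundary terms at $t_b$ match by the weak convergence. After $t_b$ the atom is transported with speed $\frac{s}{\mu}u-\frac{\eta}{\mu}=0$ and the source vanishes. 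Hence Definition \ref{solution}, weak continuity, the absolutely continuous part $u_x^2$, and \eqref{w} all hold. Yet this differs from the conservative continuation in which the peakons re-emerge. Here \eqref{beta(t)} still produces characteristics (they sit at $x=0$ for $t>t_b$), but $y_\beta\equiv0$ along a positive-measure family of them, so the failure is located exactly at this step.

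You must therefore add to the class of solutions the requirement that $\mu_{(t)}$ be absolutely continuous for a.e.\ $t\ge0$. The constructed solution satisfies it by Lemma \ref{lemma_N}, and that is where $s\neq0$ really enters. The paper's own Step 3 makes the same move, restricting to $t\notin\mathcal{N}$ for an arbitrary solution. So this is a defect you share with the paper rather than a departure from it.
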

	\subsection{Preliminary lemmas}
	Let $u=u(t,x)$ be a global conservative solution of \eqref{eq1} with $s\neq0$ in the sense of Definition \ref{conservative}. Let $y$ still denote the solutions to
	\begin{equation}\label{partial_t y(t)}
		\frac{d}{dt}y(t)=\frac{s}{\mu}u(t,y(t))-\frac{\eta}{\mu},\quad y(0)=y_0.
	\end{equation}
	Since $u$ is only H\"{o}lder continuous, uniqueness of solutions to \eqref{partial_t y(t)} is not guaranteed. However, the balance law \eqref{eqw} and \eqref{partial_t y(t)} yield
	\begin{equation}\label{coupled with (4.1)}
		\frac{d}{dt}\int_{-\infty}^{y(t)}d\mu_{(t)}=\frac{2}{\mu}\int_{-\infty}^{y(t)}u_x\left(-\left(\frac{A}{2}+\frac{s}{2\mu}\right)u^2-\frac{B}{m+1}u^{m+1}+\left(2k+\frac{\eta}{\mu}\right)u-P\right)(t,x) dx.
	\end{equation}
	In the following, we will prove that the solution to both \eqref{partial_t y(t)} and \eqref{coupled with (4.1)} is unique.
	
	Instead of the variables $(t,x)$, it is convenient to work with an adapted set of variables $(t,\beta)$. At any time $t$ and $\beta\in\mathbb{R}$, we define $y(t,\beta)$ to be the unique $y$ such that
	\begin{equation}
		y(t,\beta)+\mu_{(t)}\{(-\infty,y)\}\leq\beta\leq y(t,\beta)+\mu_{(t)}\{(-\infty,y]\}.
	\end{equation}
	When $\mu_{(t)}$ is absolutely continuous with density $u_x^2$ with respect to the Lebesgue measure, the above definition shows that
	\begin{equation}\label{y(t,beta)_absolutely continuous}
		y(t,\beta)+\int_{-\infty}^{y(t,\beta)}u_x^2(t,z)dz=\beta.
	\end{equation}
	
	We now establish the Lipschitz continuity of $y$ and $u$ as functions of $t$ and $\beta$ through a series of lemmas.
	\begin{lemm}\label{lip_3}
		Let $u=u(t,x)$ be a conservative solution of \eqref{eq1} with $s\neq0$. Then, for any $t\geq0$, the maps $\beta\mapsto y(t,\beta)$ and $\beta\mapsto u(t,\beta)\triangleq u(t,y(t,\beta))$ are Lipschitz continuous with constant 1. Moreover, the map $t\mapsto y(t,\beta)$ is also Lipschitz
		continuous with a constant depending only on $\|u_0\|_{H^1}$.
	\end{lemm}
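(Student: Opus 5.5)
The plan follows the Bressan--Chen--Zhang argument for Camassa--Holm, adapted to the drift $\frac{s}{\mu}u-\frac{\eta}{\mu}$ and the source term in \eqref{eqw}.

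\textbf{Lipschitz continuity in $\beta$.} Fix $t$ and put $F(y)\triangleq y+\mu_{(t)}\{(-\infty,y]\}$, which is strictly increasing. By definition, $y(t,\beta)$ is the generalized inverse of $F$. For $\beta_1<\beta_2$ with $y_i=y(t,\beta_i)$, the defining inequalities give
\[
\beta_2-\beta_1\ \ge\ y_2+\mu_{(t)}\{(-\infty,y_2)\}-y_1-\mu_{(t)}\{(-\infty,y_1]\}\ \ge\ y_2-y_1 \ge 0 .
\]
So $\beta\mapsto y(t,\beta)$ is non-decreasing and $1$-Lipschitz.

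For $u(t,\beta)$, I would use that the absolutely continuous part of $\mu_{(t)}$ is $u_x^2\,dx$ and apply Cauchy--Schwarz:
\[
|u(t,\beta_2)-u(t,\beta_1)|\le\int_{y_1}^{y_2}|u_x|\,dx\le \int_{y_1}^{y_2}\tfrac12\left(1+u_x^2\right)dx .
\]
The right-hand side is at most $\tfrac12\left(y_2-y_1+\mu_{(t)}\{(y_1,y_2)\}\right)\le \tfrac12(\beta_2-\beta_1)$, since the left-hand inequality at $\beta_2$ and the right-hand inequality at $\beta_1$ bound $y_2-y_1+\mu_{(t)}\{(y_1,y_2)\}$ by $\beta_2-\beta_1$. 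This gives the Lipschitz constant $1$ (indeed $\tfrac12$).

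\textbf{Lipschitz continuity in $t$.} This is the main step. Fix $\beta$ and $\tau\ge0$, and set $x_\tau=y(\tau,\beta)$. Let $\tilde y(t)$ be any characteristic through $(\tau,x_\tau)$ solving \eqref{partial_t y(t)}; one exists by Peano since $u$ is continuous. The speed is bounded by $\|y_t\|\le \frac{|s|}{\mu}\|u\|_{L^\infty}+\frac{|\eta|}{\mu}\le \widetilde C$, and $\|u\|_{L^\infty}$ is controlled by $\|u_0\|_{H^1}$ through the energy bound $\int u^2+\mu u_x^2\,dx\le E_0$. I would compare $\beta(t)\triangleq \tilde y(t)+\mu_{(t)}\{(-\infty,\tilde y(t))\}$ with $\beta$. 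Testing the balance law \eqref{w} with test functions $\varphi$ approximating $\mathbf 1_{\{x\le \tilde y(t)\}}$, in the spirit of \eqref{coupled with (4.1)}, gives
\[
\left|\mu_{(t)}\{(-\infty,\tilde y(t))\}-\mu_{(\tau)}\{(-\infty,x_\tau)\}\right|\le C|t-\tau| .
\]
The constant comes from bounding the source term: $\int |u_x|\left(|u|^2+|u|^{m+1}+|u|+|P|\right)dx\le C(E_0)$, using $\|P\|_{L^\infty}+\|P\|_{L^2}\le C(E_0)$ (Young's inequality for the convolution) and Cauchy--Schwarz. I expect the main obstacle to be handling the approximation of the indicator function when $\mu_{(t)}$ has a singular part at $\tilde y(t)$. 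Here I would use that $\mu_{(t)}$ is absolutely continuous for a.e. $t$ and is weakly continuous in $t$, taking one-sided limits. This yields $|\beta(t)-\beta|\le C|t-\tau|$ for $t\ge\tau$, up to the atom ambiguity, which is absorbed by the two-sided definition of $y(t,\beta)$.

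\textbf{Conclusion.} By the $\beta$-Lipschitz bound already proved,
\[
|y(t,\beta)-\tilde y(t)|\le |y(t,\beta)-y(t,\beta(t))|\le |\beta-\beta(t)|\le C|t-\tau| .
\]
Combined with $|\tilde y(t)-x_\tau|\le \widetilde C|t-\tau|$, this gives $|y(t,\beta)-y(\tau,\beta)|\le (C+\widetilde C)|t-\tau|$. The constant depends only on $E_0=\|u_0\|_{L^2}^2+\mu\|u_{0x}\|_{L^2}^2$, hence only on $\|u_0\|_{H^1}$.
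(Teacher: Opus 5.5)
Your $\beta$-Lipschitz argument is correct and is the paper's Steps 1--2. The gap is in the time-Lipschitz part, at the claim that for an arbitrary Peano characteristic $\tilde y$ through $(\tau,x_\tau)$,
\[
\bigl|\mu_{(t)}\{(-\infty,\tilde y(t))\}-\mu_{(\tau)}\{(-\infty,x_\tau)\}\bigr|\le C|t-\tau| .
\]

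\textbf{Why the balance law does not give this.} Testing \eqref{w} with a cutoff whose ramp follows $\tilde y$ leaves the flux term
\[
\varepsilon^{-1}\int\!\!\int_{\tilde y(t')}^{\tilde y(t')+\varepsilon}\bigl(a(t',\tilde y(t'))-a(t',x)\bigr)\,d\mu_{(t')}\,dt', \qquad a=\tfrac{s}{\mu}u-\tfrac{\eta}{\mu}.
\]
This term has no sign. H\"older continuity of $u$ only bounds it by $C\varepsilon^{-1/2}\int\mu_{(t')}([\tilde y,\tilde y+\varepsilon])\,dt'$, which need not vanish when energy concentrates.

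\textbf{The inequality is false at wave breaking.} Take the Camassa--Holm special case with an antisymmetric peakon--antipeakon collision at time $\tau$ at $x_\tau=0$.
\begin{itemize}
\item At collision $u(\tau)\equiv0$ and $\mu_{(\tau)}=M\delta_0$.
\item $u(t,\cdot)$ is odd, so $\tilde y\equiv 0$ is a characteristic.
\item By symmetry, $\mu_{(t)}\{(-\infty,0)\}\to M/2$ as $t\downarrow\tau$, while $\mu_{(\tau)}\{(-\infty,0)\}=0$.
\end{itemize}
More generally, the forward characteristics from an atom form a fan, and each carries a different fraction of the atom to its left.

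In this example $\mu_{(t)}$ is absolutely continuous for every $t\neq\tau$. So neither a.e.\ absolute continuity nor weak continuity rescues your estimate. The two-sided definition of $y(t,\beta)$ does not help either: it absorbs an atom sitting at $\tilde y(t)$ at time $t$, not an atom at time $\tau$ that has since dispersed. Hence $|\beta(t)-\beta|$ can be of order $M$, and your final step $|y(t,\beta)-\tilde y(t)|\le|\beta-\beta(t)|$ gives no Lipschitz bound.

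Picking instead the ``right'' characteristic, the one also satisfying \eqref{coupled with (4.1)}, would be circular. The next lemma constructs that characteristic via \eqref{beta(t)}, and proves it is Lipschitz using exactly the $t$-Lipschitz continuity of $y(t,\beta)$ you are trying to establish.

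\textbf{How the paper avoids this.} It does not use characteristics at all. With $\bar y=y(\tau,\beta)$, it compares with the straight lines
\[
y^\pm(t)=\bar y\pm(C_\infty+C_S)(t-\tau),
\]
where $C_\infty\ge\|a\|_{L^\infty}$ and $C_S$ bounds the $L^1$ norm of the source. These lines move at least as fast as any characteristic, so the flux term across them has a definite sign. Then \eqref{w} yields only the one-sided bound
\[
\mu_{(t)}\{(-\infty,\bar y-C_\infty(t-\tau))\}\le\mu_{(\tau)}\{(-\infty,\bar y)\}+C_S(t-\tau),
\]
together with its mirror image on closed half-lines. These give
\[
y^-(t)+\mu_{(t)}\{(-\infty,y^-(t))\}\le\beta\le y^+(t)+\mu_{(t)}\{(-\infty,y^+(t)]\},
\]
and hence $y^-(t)\le y(t,\beta)\le y^+(t)$ by the strict monotonicity of $y\mapsto y+\mu_{(t)}\{(-\infty,y]\}$. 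Only inequalities are needed, so atoms cause no trouble. Replacing your Peano-characteristic step with this cone argument closes the gap.
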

	\begin{proof}
		We carry out the proof in three steps.
		
		\textbf{Step 1.} For any fixed time $t\geq0$, the map
		$$y\mapsto F_t(y)\triangleq y+\mu_{(t)}\{(-\infty,y]\}$$
		is right continuous and strictly increasing. Hence it has a well defined, continuous, non-decreasing inverse $\beta\mapsto y(t,\beta)$. If $\beta_1<\beta_2$, then 
		$$y(t,\beta_2)-y(t,\beta_1)+\mu_{(t)}\left\{(y(t,\beta_1),y(t,\beta_2))\right\}\leq\beta_2-\beta_1,$$
		which yields
		$$y(t,\beta_2)-y(t,\beta_1)\leq\beta_2-\beta_1.$$
		
		\textbf{Step 2.} Let $\beta_1<\beta_2$. Then we have 
		\begin{align}
			\left|u(t,y(t,\beta_{2}))-u(t,y(t,\beta_{1}))\right|\leq\int_{y(t,\beta_{1})}^{y(t,\beta_{2})}|u_{x}|dx\leq\int_{y(t,\beta_{1})}^{y(t,\beta_{2})}\frac{1}{2}(1+u_{x}^{2})dx\notag\\
			\leq\frac{1}{2}\left[y(t,\beta_{2})-y(t,\beta_{1})+\mu_{(t)}\left\{(y(t,\beta_{1}),y(t,\beta_{2}))\right\}\right]\leq\frac{1}{2}(\beta_{2}-\beta_{1}).\label{lip_u_beta}
		\end{align}
		
		\textbf{Step 3.} Assume $y(\tau,\beta)=\bar{y}$. Since
		$$
		\|\frac{s}{\mu}u-\frac{\eta}{\mu}\|_{L^\infty}\leq C(\|u\|_{H^1}+1)\leq C_{\infty}
		$$
		and 
		\begin{align*}
			&\|\frac{2}{\mu}u_x\left(-\left(\frac A2+\frac{s}{2\mu}\right)u^2-\frac{B}{m+1}u^{m+1}+\left(2k+\frac{\eta}{\mu}\right)u-P\right)\|_{L^1}\\
			\leq&C\|u_x\|_{L^2}\left(\|u\|_{L^\infty}\|u\|_{L^2}+\|u\|_{L^\infty}^m\|u\|_{L^2}+\|u\|_{L^2}+\|P\|_{L^2}\right)\\
			\leq&C_S
		\end{align*}
		for $C_\infty,\ C_S>0$ depending only on $\|u_0\|_{H^1}$, the balance law \eqref{eqw} implies that for $t>\tau$, 
		\begin{align*}
			&\mu_{(t)}\{ (-\infty,\bar{y}-C_{\infty}(t-\tau))\}\\
			\leq& \mu_{(\tau)}\{(-\infty,\bar{y})\}+\int_{\tau}^{t}\left\|\frac{2}{\mu}u_x\left(-\left(\frac A2+\frac{s}{2\mu}\right)u^2-\frac{B}{m+1}u^{m+1}+\left(2k+\frac{\eta}{\mu}\right)u-P\right)\right\|_{L^{1}(\mathbb{R})} dt \\
			\leq& \mu_{(\tau)}\{(-\infty, \bar{y})\}+C_{S}(t-\tau) .
		\end{align*}
		Defining $y^{-}(t)\triangleq \bar{y}-(C_{\infty}+C_{S})(t-\tau)$, we have
		\begin{align*}
			& y^{-}(t)+\mu_{(t)}\{(-\infty , y^{-}(t))\} \\
			\leq& \bar{y}-(C_{\infty}+C_{S})(t-\tau)+\mu_{(\tau)}\{(-\infty , \bar{y})\}+C_{S}(t-\tau) \\
			\leq& \bar{y}+\mu_{(\tau)}\{(-\infty ,\bar{y})\} \leq \beta .
		\end{align*}
		which leads to $y(t,\beta)\geq y^{-}(t)$ for all $t>\tau$. Similarly, we have $y(t,\beta)\leq y^{+}(t)\triangleq \bar{y}+(C_{\infty}+C_{S})(t-\tau)$. Hence, $t\mapsto y(t,\beta)$ is  Lipschitz continuous with a constant depending only on $\|u_0\|_{H^1}$.		
	\end{proof}
	
	\begin{lemm}
		Let $u=u(t,x)$ be a conservative solution of \eqref{eq1} with $s\neq0$. Then, for any $y_0\in\mathbb{R}$, there exists a unique Lipschitz continuous map $t\mapsto y(t)$ satisfying both \eqref{partial_t y(t)} and \eqref{coupled with (4.1)}. Furthermore, for any $0\leq\tau\leq t$, we have
		\begin{equation}\label{u_Px}
			u(t,y(t))-u(\tau,y(\tau))=-\int_\tau^t P_x(s,y(s))ds.
		\end{equation}
	\end{lemm}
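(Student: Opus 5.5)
We follow the Bressan--Chen--Zhang strategy for the Camassa--Holm equation and rewrite the pair \eqref{partial_t y(t)}--\eqref{coupled with (4.1)} as a single integral equation for the characteristic in the variable $\beta$. If $t\mapsto y(t)$ satisfies both equations, we set
\[
\beta(t)\triangleq y(t)+\mu_{(t)}\{(-\infty,y(t))\},
\]
so that $y(t)=y(t,\beta(t))$ by the definition of $y(t,\beta)$. For a.e. $t$ (those $t$ with $\mu_{(t)}$ absolutely continuous) we differentiate using \eqref{partial_t y(t)} and \eqref{coupled with (4.1)}. This gives
\[
\dot\beta(t)=\frac{s}{\mu}u(t,y(t))-\frac{\eta}{\mu}+\frac{2}{\mu}\int_{-\infty}^{y(t)}u_x\Big(-\big(\tfrac A2+\tfrac{s}{2\mu}\big)u^2-\tfrac{B}{m+1}u^{m+1}+\big(2k+\tfrac{\eta}{\mu}\big)u-P\Big)dx\triangleq G(t,\beta(t)),
\]
where $G(t,\beta)$ denotes the same expression evaluated at $y=y(t,\beta)$. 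Conversely, a Lipschitz solution of $\beta(t)=\beta(0)+\int_0^tG(\tau,\beta(\tau))d\tau$, with $\beta(0)=y_0+\mu_{(0)}\{(-\infty,y_0)\}$ and $y(t)\triangleq y(t,\beta(t))$, should satisfy both equations. To check this I will use Lemma \ref{lip_3} and the weak form \eqref{w} tested against functions approximating the indicator of $(-\infty,y(t)]$, in the standard way.

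The heart of the argument is that $\beta\mapsto G(t,\beta)$ is Lipschitz with a constant depending only on $\|u_0\|_{H^1}$. By Lemma \ref{lip_3}, $\beta\mapsto u(t,\beta)$ is Lipschitz, so the transport part $\frac{s}{\mu}u-\frac{\eta}{\mu}$ is fine. For the integral part, take $\beta_1<\beta_2$. The difference is $\frac{2}{\mu}\int_{y(t,\beta_1)}^{y(t,\beta_2)}u_x\,g\,dx$, where $g$ is bounded in $L^\infty$ using the $L^\infty$ bounds on $u$ and $P$. Since $|u_x|\le\frac12(1+u_x^2)$, this is at most $C\big[y_2-y_1+\mu_{(t)}\{(y_1,y_2)\}\big]\le C(\beta_2-\beta_1)$. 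This holds for every $t\notin\mathcal N$, where $\mathcal N$ is the null time set of Definition \ref{conservative} (at such $t$ the measure is absolutely continuous), and that is all the integral formulation needs. Measurability of $G$ in $t$ follows from the continuity properties of $u$ and $\mu_{(t)}$.

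Once $G$ is known to be bounded and Lipschitz in $\beta$ uniformly in $t$, Carathéodory's theorem gives a unique Lipschitz solution $\beta(t)$. Hence $y(t)=y(t,\beta(t))$ is unique, and it is Lipschitz by Lemma \ref{lip_3}.

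For \eqref{u_Px}, I will compute $\frac{d}{dt}u(t,y(t))$ along this characteristic. The argument tests the weak form \eqref{u} of Definition \ref{solution}: integrate $u_t+(\frac{s}{\mu}u-\frac{\eta}{\mu})u_x+P_x=0$ over a thin curvilinear strip $\{(t,x):\tau\le t\le T,\ y(t)-\varepsilon\le x\le y(t)\}$ and divide by $\varepsilon$. The flux terms combine because $\dot y=\frac{s}{\mu}u(t,y(t))-\frac{\eta}{\mu}$. The error terms are controlled by the Hölder continuity of $u$ and by the smallness of $\int_\tau^T\int_{y(t)-\varepsilon}^{y(t)}u_x^2\,dx\,dt$ as $\varepsilon\to0$; this smallness is where \eqref{coupled with (4.1)} is used, since it rules out mass concentrating on the curve. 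Letting $\varepsilon\to0$ and using continuity of $P_x$ in $x$ yields \eqref{u_Px}.

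I expect the main obstacle to be this last limit, namely justifying that no $u_x^2$ concentrates along $y(t)$ for a.e. $t$, together with verifying the equivalence between the two ODEs and the $\beta$-equation on the exceptional times.
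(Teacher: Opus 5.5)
Your overall route is the paper's Bressan--Chen--Zhang scheme: rewrite the pair as the integral equation \eqref{beta(t)}, bound $G$ in $\beta$ via $|u_x|\le\frac12(1+u_x^2)$ and Lemma \ref{lip_3}, solve by a fixed point, then average over a thin strip along $y(t)$ to get \eqref{u_Px}.

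\textbf{The gap.} The genuine gap is the converse step, and it is the entire existence half of the lemma: you never show that $y(t)\triangleq y(t,\beta(t))$ satisfies \eqref{partial_t y(t)}. As you describe it, testing \eqref{w} against approximations of $\mathbf{1}_{(-\infty,y(t)]}$ along the curve $y(t)$ itself does not work. With $\varphi^\varepsilon$ equal to $1$ up to $y(t')$ and decaying linearly on $[y(t'),y(t')+\varepsilon]$, the strip contributes
\[
\lim_{\varepsilon\to0}\varepsilon^{-1}\int_\tau^t\int_{y(t')}^{y(t')+\varepsilon}\Big(\dot y(t')-\frac{s}{\mu}u(t',x)+\frac{\eta}{\mu}\Big)\,d\mu_{(t')}\,dt'.
\]
This is exactly the unknown velocity defect, integrated against a possibly singular measure. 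Since $y$ is only Lipschitz, you have no control of $\dot y(t')$ for $t'$ near $\tau$, so this term has no sign.

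The missing idea is the paper's comparison argument:
\begin{itemize}
\item At a differentiability point $\tau$, suppose $\dot y(\tau)=\frac{s}{\mu}u(\tau,y(\tau))-\frac{\eta}{\mu}+2\varepsilon_0$ with $\varepsilon_0>0$.
\item Test with the straight line $y^+(t)=y(\tau)+(t-\tau)\big[\frac{s}{\mu}u(\tau,y(\tau))-\frac{\eta}{\mu}+\varepsilon_0\big]$ instead of $y(t)$. Then $y^+(t)<y(t)$ for $t>\tau$ close to $\tau$.
\item By continuity of $u$ and $\varphi^\varepsilon_x\le0$, the strip integrand $\varphi^\varepsilon_t+\varphi^\varepsilon_x(\frac{s}{\mu}u-\frac{\eta}{\mu})$ is nonnegative. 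This yields a one-sided lower bound for $\mu_{(t)}\{(-\infty,y^+(t)]\}$.
\item Monotonicity gives $\beta(t)>y^+(t)+\mu_{(t)}\{(-\infty,y^+(t)]\}$. Comparing with the first-order expansion of $\beta(t)$ from \eqref{beta(t)} forces $\varepsilon_0\le0$; the case $\varepsilon_0<0$ is symmetric.
\end{itemize}
Without this, or an equivalent device that gives the boundary flux a sign, existence is not established. Your stated worry about ``exceptional times'' is not where the difficulty lies.

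\textbf{Two smaller points.}

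First, the Lipschitz estimate for $G$ holds for every $t$. Indeed $\int_{y_1}^{y_2}u_x^2\,dx\le\mu_{(t)}\{(y_1,y_2)\}$ always, since the absolutely continuous part is dominated by the measure, and Step 1 of Lemma \ref{lip_3} holds for all $t$. So you need no null set $\mathcal N$. Definition \ref{conservative} does not provide one anyway; $\mathcal N$ is defined only in Lemma \ref{lemma_N}, for the constructed solution.

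Second, for \eqref{u_Px} your strip computation is fine and mirrors the paper's Step 5. The error term is exactly $\frac{s}{2\mu}\varepsilon^{-1}|u(t,y(t))-u(t,y(t)-\varepsilon)|^2\le\frac{|s|}{2\mu}\int_{y(t)-\varepsilon}^{y(t)}u_x^2\,dx$. Its vanishing is automatic: for each fixed $t$, $u_x(t,\cdot)\in L^2$ cannot concentrate, so dominated convergence in $t$ suffices. Only \eqref{partial_t y(t)} is used there, not \eqref{coupled with (4.1)}, and the singular part of $\mu_{(t)}$ never enters. So the step you flagged as the main obstacle is routine, while the one you deferred to ``the standard way'' is the real content of the lemma.
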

	\begin{proof}
		We divide the proof into six steps.
		
		\textbf{Step 1.} In the adapted coordinates $(t,\beta)$, we express the characteristic starting at $y_0$ as $t \mapsto y(t) = y(t, \beta(t))$, where $\beta(\cdot)$ is a map to be determined later. Combining \eqref{partial_t y(t)} with \eqref{coupled with (4.1)} and
		integrating with respect to time, we obtain
		\begin{align}
			&y(t)+\int_{-\infty}^{y(t)}d\mu_{(t)}=y_0+\int_{-\infty}^{y_0}u_{0x}^2(x)dx+\int_{0}^{t}\left(\frac{s}{\mu}u(t^\prime,y(t^\prime))-\frac{\eta}{\mu}\right)dt^\prime\notag\\
			&+\frac{2}{\mu}\int_{0}^{t}\int_{-\infty}^{y(t^\prime)}u_x\left(-\left(\frac{A}{2}+\frac{s}{2\mu}\right)u^2-\frac{B}{m+1}u^{m+1}+\left(2k+\frac{\eta}{\mu}\right)u-P\right)(t^\prime,x)dxdt^\prime.\label{combined}
		\end{align}
		Introducing the function
		\begin{align}\label{G(t,beta)}
			G(t,\beta)\triangleq\int_{-\infty}^{y(t,\beta)}\frac{s}{\mu}u_x+\frac{2}{\mu}u_x\left(-\left(\frac{A}{2}+\frac{s}{2\mu}\right)u^2-\frac{B}{m+1}u^{m+1}+\left(2k+\frac{\eta}{\mu}\right)u-P\right)dx-\frac{\eta}{\mu}
		\end{align}
		and the constant
		\begin{equation}\label{beta_0}
			\beta_0\triangleq y_0+\int_{-\infty}^{y_0}u_{0x}^2(x)dx,
		\end{equation}
		we can rewrite \eqref{combined} in the form
		\begin{equation}\label{beta(t)}
			\beta(t)=\beta_0+\int_0^tG(t^\prime,\beta(t^\prime))dt^\prime.
		\end{equation}
		
		\textbf{Step 2.} For each fixed $t\geq 0$, since $u(t,\cdot)$ and $P(t,\cdot)$ are both in $H^1(\mathbb{R})$, the function $\beta\mapsto G(t,\beta)$ is uniformly bounded and absolutely continuous. Hence, it follows from \eqref{y(t,beta)_absolutely continuous} and \eqref{G(t,beta)} that
		\begin{equation}\label{G(t,beta)_lip}
			|G(t,\beta_{2})-G(t,\beta_{1})|\leq C|\beta_{2}-\beta_{1}|,\quad \forall\ \beta_{1},\beta_{2}\in\mathbb{R},
		\end{equation}
		where $C>0$ is a constant depending only on $\|u_0\|_{H^1}$.
		
		\textbf{Step 3.} The uniform Lipschitz continuity of $G$ allows us to establish, via a standard fixed-point argument, the existence of a unique solution to the integral equation \eqref{beta(t)}. Consider the Banach space of all continuous functions $\beta:\mathbb{R}^+\to\mathbb{R}$ with weighted norm
		$$\|\beta\|_\star\triangleq\sup_{t\geq0}e^{-2Ct}|\beta(t)|.$$
		On this space, we assert that the Picard map
		$$(\mathcal{P}\beta)(t)\triangleq \beta_0+\int_0^tG(\tau,\beta(\tau))d\tau$$
		is a strict contraction. In fact, if $\|\beta-\tilde{\beta}\|_*=\delta>0$, then we have $|\beta(\tau)-\tilde{\beta}(\tau)|\leq\delta e^{2C\tau}$ for all $\tau\geq0$. Therefore, 
		\begin{align*}
			\left|(\mathcal{P}\beta)(t)-(\mathcal{P}\tilde{\beta})(t)\right| & =\left|\int_{0}^{t}\left(G(\tau,\beta(\tau))-G(\tau,\tilde{\beta}(\tau))\right)d\tau\right| \\
			& \leq C\int_{0}^{t}|\beta(\tau)-\tilde{\beta}(\tau)|d\tau \\
			& \leq C\delta\int_{0}^{t} e^{2C\tau}d\tau\leq\frac{\delta}{2}e^{2Ct},
		\end{align*}
		which implies $\|\mathcal{P}\beta-\mathcal{P}\tilde{\beta}\|_{*}\leq\delta/2$. The contraction mapping principle thus yields a unique solution to \eqref{beta(t)}.
		
		\textbf{Step 4.} The above analysis yields a solution to \eqref{combined} in the form $t \mapsto y(t) \triangleq y(t, \beta(t))$. It follows from Lemma \ref{lip_3} and the Lipschitz continuity of $\beta(t)$ that $t \mapsto y(t, \beta(t))$ is Lipschitz continuous, which means that $y(t)$ is differentiable almost everywhere. To prove that $y(t)$ satisfies \eqref{partial_t y(t)}, it suffices to verify the equation at those times $\tau > 0$ for which $y(\cdot)$ is differentiable at $t=\tau$.
		%(2) the measure $\mu_{(\tau)}$ is absolutely continuous.
		
		Assume, on the contrary, that $y(\cdot)$ is differentiable at $t=\tau$ but $\dot{y}(\tau)\neq \frac{s}{\mu}u(\tau,y(\tau))-\frac{\eta}{\mu}$. Without loss of generality, let
		\begin{equation}\label{dot_y_contrary}
			\dot{y}(\tau)=\frac{s}{\mu}u(\tau,y(\tau))-\frac{\eta}{\mu}+2\varepsilon_0
		\end{equation}
		for some $\varepsilon_0>0$. The situation for $\varepsilon_0<0$ is completely similar. Then, for $t\in[\tau,\tau+\delta]$, with $\delta>0$ small enough, we have
		\begin{equation}
			y^+(t)\triangleq y(\tau)+(t-\tau)\left[\frac{s}{\mu}u(\tau,y(\tau))-\frac{\eta}{\mu}+\varepsilon_0\right]<y(t).
		\end{equation}
		By an approximation argument, we can extend the validity of \eqref{w} to any test function $\varphi \in H^1$ that is Lipschitz continuous with compact support. For any $\varepsilon>0$ small enough, we thus consider the functions
		\begin{equation*}
			\rho^\varepsilon(s,x)\triangleq\left\{
			\begin{array}{cll}
				0 & \quad\mathrm{if}\quad x\leq-\varepsilon^{-1}, \\
				x+\varepsilon^{-1} & \quad\mathrm{if}\quad-\varepsilon^{-1}\leq x\leq1-\varepsilon^{-1}, \\
				1 & \quad\mathrm{if}\quad1-\varepsilon^{-1}\leq x\leq y^+(s), \\
				1-\varepsilon^{-1}(x-y^+(s)) & \quad\mathrm{if}\quad y^+(s)\leq x\leq y^+(s)+\varepsilon, \\
				0 & \quad\mathrm{if}\quad x\geq y^+(s)+\varepsilon,
			\end{array}\right.
		\end{equation*}
		\begin{equation}\label{chi_epsilon}
			\chi^\varepsilon(s)\triangleq\left\{
			\begin{array}{cll}
				0 & \quad\mathrm{if}\quad s\leq\tau-\varepsilon, \\
				\varepsilon^{-1}(s-\tau+\varepsilon) & \quad\mathrm{if}\quad\tau-\varepsilon\leq s\leq\tau, \\
				1 & \quad\mathrm{if}\quad\tau\leq s\leq t, \\
				1-\varepsilon^{-1}(s-t) & \quad\mathrm{if}\quad t\leq s\leq t+\varepsilon, \\
				0 & \quad\mathrm{if}\quad s\geq t+\varepsilon.
			\end{array}\right.
		\end{equation}
		Define 
		$$\varphi^\varepsilon(s,x)\triangleq\min\{\rho^\varepsilon(s,x),\chi^\varepsilon(s)\}.$$
		%Since $\mu_{(t)}$ is absolutely continuous with respect to the Lebesgue measure for $t\notin\mathcal{N}$ with density $u_x^2$, and $\mathrm{meas}(\mathcal{N})=0$, 
		Substituting $\varphi^\varepsilon$ as a test function into \eqref{w} yields
		\begin{align}
			&\int_{0}^{\infty}\int_\mathbb{R}\varphi^\varepsilon_t+\varphi^\varepsilon_x\left(\frac{s}{\mu}u-\frac{\eta}{\mu}\right)d\mu_{(t)}dt\notag\\
			+&\frac{2}{\mu}\int_{0}^{\infty}\int_\mathbb{R}\left(-\left(\frac{A}{2}+\frac{s}{2\mu}\right)u^2-\frac{B}{m+1}u^{m+1}+\left(2k+\frac{\eta}{\mu}\right)u-P\right)u_x\varphi^\varepsilon dxdt=0.\label{test_phi_epsilon}
		\end{align}
		If $t$ is sufficiently close to $\tau$, then
		\begin{equation*}
			\lim_{\varepsilon\to0}\int_{\tau}^{t}\int_{y^{+}(t^\prime)}^{y^{+}(t^\prime)+\varepsilon}\varphi^\varepsilon_t+\varphi^\varepsilon_x\left(\frac{s}{\mu}u-\frac{\eta}{\mu}\right)d\mu_{(t^\prime)}dt^\prime\geq0.
		\end{equation*}
		Indeed, for $t^\prime\in(\tau,t)$, $x\in(y^{+}(t^\prime),y^{+}(t^\prime)+\varepsilon)$, the facts that $\frac{s}{\mu}u(t^\prime,x)<\frac{s}{\mu}u(\tau,y(\tau))+\varepsilon_0$ and $\varphi^\varepsilon_x\leq0$ imply
		$$
		0=\varphi^\varepsilon_t+\varphi^\varepsilon_x\left(\frac{s}{\mu}u(\tau,y(\tau))-\frac{\eta}{\mu}+\varepsilon_0\right)\leq \varphi^\varepsilon_t+\varphi^\varepsilon_x\left(\frac{s}{\mu}u(t^\prime,x)-\frac{\eta}{\mu}\right).
		$$
		Owing to the weak continuity of the map $t \mapsto \mu_{(t)}$, taking the limit of \eqref{test_phi_epsilon} as $\varepsilon\to0$ gives
		\begin{align*}
			0=
			&\int_{-\infty}^{y(\tau)}d\mu_{(\tau)}-\int_{-\infty}^{y^{+}(t)}d\mu_{(t)} +\lim_{\varepsilon\to0}\int_{\tau}^{t}\int_{y^{+}(t^\prime)}^{y^{+}(t^\prime)+\varepsilon}\varphi^\varepsilon_t+\varphi^\varepsilon_x\left(\frac{s}{\mu}u-\frac{\eta}{\mu}\right)d\mu_{(t^\prime)}dt^\prime \\
			& 
			+\frac{2}{\mu}\int_{\tau}^{t}\int_{-\infty}^{y^{+}(t^\prime)}\left(-\left(\frac{A}{2}+\frac{s}{2\mu}\right)u^2-\frac{B}{m+1}u^{m+1}+\left(2k+\frac{\eta}{\mu}\right)u-P\right)u_xdxdt^\prime \\
			\geq&\int_{-\infty}^{y(\tau)}d\mu_{(\tau)}-\int_{-\infty}^{y^{+}(t)}d\mu_{(t)}+o_1(t-\tau)\\
			& 
			+\frac{2}{\mu}\int_{\tau}^{t}\int_{-\infty}^{y(t^\prime)}\left(-\left(\frac{A}{2}+\frac{s}{2\mu}\right)u^2-\frac{B}{m+1}u^{m+1}+\left(2k+\frac{\eta}{\mu}\right)u-P\right)u_xdxdt^\prime,
		\end{align*}
		where $o_1(t-\tau)$ satisfies $\frac{o_{1}(t-\tau)}{t-\tau}\to0$ as $t\to\tau$. Indeed, 
		\begin{align*}
			|o_{1}(t-\tau)|=&\frac{2}{\mu}\left|\int_{\tau}^{t}\int_{y^{+}(t^\prime)}^{y(t^\prime)}\left(-\left(\frac{A}{2}+\frac{s}{2\mu}\right)u^2-\frac{B}{m+1}u^{m+1}+\left(2k+\frac{\eta}{\mu}\right)u-P\right)u_xdxdt^\prime\right|\\
			\leq&\frac{2}{\mu}\left\|-\left(\frac{A}{2}+\frac{s}{2\mu}\right)u^2-\frac{B}{m+1}u^{m+1}+\left(2k+\frac{\eta}{\mu}\right)u-P\right\|_{L^\infty}\int_{\tau}^{t}\int_{y^{+}(t^\prime)}^{y(t^\prime)}|u_{x}|dxdt^\prime \\
			\leq&C\int_{\tau}^{t}(y(t^\prime)-y^{+}(t^\prime))^{1/2}\|u_{x}(t^\prime,\cdot)\|_{L^{2}}dt^\prime\\
			\leq&C(t-\tau)^{3/2}.
		\end{align*}
		Therefore, for every $t>\tau$ sufficiently close to $\tau$, we obtain
		\begin{align}
			\beta(t) & =y(t)+\int_{-\infty}^{y(t)}d\mu_{(t)}\notag \\
			& > y(\tau)+(t-\tau)\left[\frac{s}{\mu}u(\tau,y(\tau))-\frac{\eta}{\mu}+\varepsilon_0\right]+\int_{-\infty}^{y^{+}(t)}d\mu_{(t)}\notag \\
			& \geq y(\tau)+(t-\tau)\left[\frac{s}{\mu}u(\tau,y(\tau))-\frac{\eta}{\mu}+\varepsilon_0\right]+\int_{-\infty}^{y(\tau)}d\mu_{(\tau)}+o_1(t-\tau)\notag\\
			&+\frac{2}{\mu}\int_{\tau}^{t}\int_{-\infty}^{y(t^\prime)}\left(-\left(\frac{A}{2}+\frac{s}{2\mu}\right)u^2-\frac{B}{m+1}u^{m+1}+\left(2k+\frac{\eta}{\mu}\right)u-P\right)u_xdxdt^\prime.\label{1!}
		\end{align}
		On the other hand, it follows from \eqref{G(t,beta)} and \eqref{beta(t)} that
		\begin{align}
			\beta(t)=&\beta(\tau)-\frac{2}{\mu}\int_{\tau}^{t}\int_{-\infty}^{y(\tau)}u_xPdxdt^\prime+(t-\tau)\left(\frac{s}{\mu}u(\tau,y(\tau))-\frac{\eta}{\mu}\right)\notag\\
			&-(t-\tau)\left[\frac{2}{3\mu}\left(\frac{A}{2}+\frac{s}{2\mu}\right)u^3+\frac{2B}{\mu(m+1)(m+2)}u^{m+2}-\frac{1}{\mu}\left(2k+\frac{\eta}{\mu}\right)u^{2}\right](\tau,y(\tau))\notag\\
			&+o_2(t-\tau),\label{2!}
		\end{align}
		with $\frac{o_{2}(t-\tau)}{t-\tau}\to0$ as $t\to\tau$. We then derive from \eqref{1!} and \eqref{2!} that
		\begin{align*}
			&\beta(\tau)-\frac{2}{\mu}\int_{\tau}^{t}\int_{-\infty}^{y(\tau)}u_xPdxdt^\prime+(t-\tau)\left(\frac{s}{\mu}u(\tau,y(\tau))-\frac{\eta}{\mu}\right)+o_2(t-\tau)\\
			&-(t-\tau)\left[\frac{2}{3\mu}\left(\frac{A}{2}+\frac{s}{2\mu}\right)u^3+\frac{2B}{\mu(m+1)(m+2)}u^{m+2}-\frac{1}{\mu}\left(2k+\frac{\eta}{\mu}\right)u^{2}\right](\tau,y(\tau))\\
			\geq&y(\tau)+(t-\tau)\left[\frac{s}{\mu}u(\tau,y(\tau))-\frac{\eta}{\mu}+\varepsilon_0\right]+\int_{-\infty}^{y(\tau)}d\mu_{(\tau)}+o_1(t-\tau)\\
			&+\frac{2}{\mu}\int_{\tau}^{t}\int_{-\infty}^{y(t^\prime)}\left(-\left(\frac{A}{2}+\frac{s}{2\mu}\right)u^2-\frac{B}{m+1}u^{m+1}+\left(2k+\frac{\eta}{\mu}\right)u-P\right)u_xdxdt^\prime.
		\end{align*}
		Subtracting common terms, dividing both sides by $t-\tau$ and letting $t\to\tau$, we have $\varepsilon_0\leq0$, which contradicts the fact that $\varepsilon_0>0$. Hence, \eqref{partial_t y(t)} must hold.
		
		\textbf{Step 5.} Our goal now is to prove \eqref{u_Px}. For any test function $\phi\in C_c^\infty(\mathbb{R}^+\times\mathbb{R})$, it follows from \eqref{u} that 
		$$\int_0^\infty\int_\mathbb{R}\left[u\phi_t+\left(\frac{s}{2\mu}u^2-\frac{\eta}{\mu}u\right)\phi_x-P_x\phi\right]dxdt+\int_\mathbb{R} u_0(x)\phi(0,x)dx=0.$$
		Given any $\varphi\in C_c^\infty$, let $\phi=\varphi_x$. Since the map $x\mapsto u(t,x)$ is absolutely continuous, integrating by parts with respect to $x$ yields
		\begin{align}
			\int_0^\infty\int_\mathbb{R}\left[u_x\varphi_t+\left(\frac{s}{\mu}u-\frac{\eta}{\mu}\right)u_x\varphi_x+P_x\varphi_x\right]dxdt+\int_\mathbb{R} u_{0x}(x)\varphi(0,x)dx=0.\label{test_phi2}
		\end{align}
		An approximation argument shows that \eqref{test_phi2} holds for every compactly supported Lipschitz continuous test function $\varphi$. For any $\varepsilon>0$ small enough, we thus consider the functions
		\begin{equation*}
			\varrho^\varepsilon(s,x)\triangleq\left\{
			\begin{array}{cll}
				0 & \quad\mathrm{if}\quad x\leq-\varepsilon^{-1}, \\
				x+\varepsilon^{-1} & \quad\mathrm{if}\quad-\varepsilon^{-1}\leq x\leq1-\varepsilon^{-1}, \\
				1 & \quad\mathrm{if}\quad1-\varepsilon^{-1}\leq x\leq y(s), \\
				1-\varepsilon^{-1}(x-y(s)) & \quad\mathrm{if}\quad y(s)\leq x\leq y(s)+\varepsilon, \\
				0 & \quad\mathrm{if}\quad x\geq y(s)+\varepsilon,
			\end{array}\right.
		\end{equation*}
		and 
		$$\psi^\varepsilon(s,x)\triangleq\min\{\varrho^\varepsilon(s,x),\chi^\varepsilon(s)\},$$
		with $\chi^\varepsilon(s)$ as in \eqref{chi_epsilon}. Since $P_x$ is continuous, substituting $\psi^\varepsilon$ as a test function into \eqref{test_phi2} and letting $\varepsilon\to0$ gives
		\begin{align}
			\int_{-\infty}^{y(t)}u_{x}(t,x)dx=&\int_{-\infty}^{y(\tau)}u_{x}(\tau,x)dx-\int_{\tau}^{t}P_{x}(t^\prime,y(t^\prime))dt^\prime\notag \\
			&+\lim_{\varepsilon\to0}\int_{\tau-\varepsilon}^{t+\varepsilon}\int_{y(t^\prime)}^{y(t^\prime)+\varepsilon}u_{x}\left(\psi_{t}^{\varepsilon}+\left(\frac{s}{\mu}u-\frac{\eta}{\mu}\right)\psi_{x}^{\varepsilon}\right)dxdt^\prime.\label{aaa}
		\end{align}
		To obtain \eqref{u_Px}, we are left to show that the last term on the right-hand side of \eqref{aaa} vanishes. Since $u_x\in L^2$, Cauchy’s inequality leads to
		\begin{align}
			&\left|\int_{\tau}^{t}\int_{y(t^\prime)}^{y(t^\prime)+\varepsilon}u_{x}\left(\psi_{t}^{\varepsilon}+\left(\frac{s}{\mu}u-\frac{\eta}{\mu}\right)\psi_{x}^{\varepsilon}\right)dxdt^\prime\right|\notag\\
			\leq&\int_{\tau}^{t}\left(\int_{y(t^\prime)}^{y(t^\prime)+\varepsilon}u_{x}^2dx\right)^{\frac{1}{2}}\left(\int_{y(t^\prime)}^{y(t^\prime)+\varepsilon}\left(\psi_{t}^{\varepsilon}+\left(\frac{s}{\mu}u-\frac{\eta}{\mu}\right)\psi_{x}^{\varepsilon}\right)^2dx\right)^{\frac{1}{2}}dt^\prime.\label{bbb}
		\end{align}
		Define
		$$\eta_\varepsilon(t)\triangleq\left(\sup_{x\in\mathbb{R}}\int_{x}^{x+\varepsilon}u_x^2(t,y)dy\right)^{\frac{1}{2}}.$$		
		Owing to the uniform boundedness of $\{\eta_{\varepsilon}\}_{\varepsilon>0}$ and the pointwise convergence $\lim\limits_{\varepsilon\to0}\eta_{\varepsilon}(t)=0$ for a.e. $t$, we deduce by the dominated convergence theorem that 
		\begin{align}\label{ccc}
			\lim_{\varepsilon\to0}\int_{\tau}^{t}\left(\int_{y(t^\prime)}^{y(t^\prime)+\varepsilon}u_{x}^2(t^\prime,x)dx\right)^{\frac{1}{2}}dt^\prime\leq\lim_{\varepsilon\to0}\int_{\tau}^{t}\eta_\varepsilon(t^\prime)dt^\prime=0.
		\end{align}
		Furthermore, the definition of $\psi^\varepsilon$ gives that, for $t^\prime\in[\tau,t]$,
		\begin{align}
			&\int_{y(t^\prime)}^{y(t^\prime)+\varepsilon}\left(\psi_{t}^{\varepsilon}+\left(\frac{s}{\mu}u-\frac{\eta}{\mu}\right)\psi_{x}^{\varepsilon}\right)^2(t^\prime,x)dx=\left(\frac{s}{\mu}\right)^2\varepsilon^{-2}\int_{y(t^\prime)}^{y(t^\prime)+\varepsilon}\left|u(t^\prime,x)-u(t^\prime,y(t^\prime))\right|^2dx\notag\\
			\leq&\left(\frac{s}{\mu}\right)^2\varepsilon^{-1}\left(\max_{y(t^\prime)\leq x\leq y(t^\prime)+\varepsilon}|u(t^\prime,x)-u(t^\prime,y(t^\prime))|\right)^{2}\leq\left(\frac{s}{\mu}\right)^2\varepsilon^{-1}\left(\int_{y(t^\prime)}^{y(t^\prime)+\varepsilon}|u_{x}(t^\prime,x)|dx\right)^{2}\notag\\
			\leq&\left(\frac{s}{\mu}\right)^2\varepsilon^{-1}\left(\varepsilon^{\frac{1}{2}}\|u_{x}(t^\prime)\|_{L^{2}}\right)^{2}\leq\left(\frac{s}{\mu}\right)^2\|u(t^\prime)\|_{H^{1}}^{2}.\label{ddd}
		\end{align}
		It follows from \eqref{ccc} and \eqref{ddd} that \eqref{bbb} tends to zero as $\varepsilon\to 0$. In addition, 
		\begin{align*}
			& \left|\left(\int_{\tau-\varepsilon}^{\tau}+\int_{t}^{t+\varepsilon}\right)\int_{y(t^\prime)}^{y(t^\prime)+\varepsilon}u_{x}\left(\psi_{t}^{\varepsilon}+\left(\frac{s}{\mu}u-\frac{\eta}{\mu}\right)\psi_{x}^{\varepsilon}\right)dxdt^\prime\right| \\
			\leq&\left(\int_{\tau-\varepsilon}^{\tau}+\int_{t}^{t+\varepsilon}\right)\left(\int_{y(t^\prime)}^{y(t^\prime)+\varepsilon}u_{x}^{2}dx\right)^{\frac{1}{2}}\left(\int_{y(t^\prime)}^{y(t^\prime)+\varepsilon}\left(\psi_{t}^{\varepsilon}+\left(\frac{s}{\mu}u-\frac{\eta}{\mu}\right)\psi_{x}^{\varepsilon}\right)^{2}dx\right)^{\frac{1}{2}}dt^\prime \\
			\leq&\left(\int_{\tau-\varepsilon}^{\tau}+\int_{t}^{t+\varepsilon}\right)\|u(t^\prime)\|_{H^{1}}\left(\int_{y(t^\prime)}^{y(t^\prime)+\varepsilon}C\varepsilon^{-2}\left(1+\|u\|_{L^{\infty}}^{2}\right)dx\right)^{\frac{1}{2}}dt^\prime\leq C\varepsilon^{\frac{1}{2}}\to0
		\end{align*}
		as $\varepsilon\to0$. Hence, we conclude that
		$$\lim_{\varepsilon\to0}\int_{\tau-\varepsilon}^{t+\varepsilon}\int_{y(t^\prime)}^{y(t^\prime)+\varepsilon}u_{x}\left(\psi_{t}^{\varepsilon}+\left(\frac{s}{\mu}u-\frac{\eta}{\mu}\right)\psi_{x}^{\varepsilon}\right)dxdt^\prime=0.$$
		
		\textbf{Step 6.} Using the uniqueness of $\beta(t)$, we obtain the uniqueness of $y(t)$.
	\end{proof}
	
	\begin{lemm}\label{beta_monotone}
		Let $u=u(t,x)$ be a conservative solution of \eqref{eq1} with $s\neq0$ and $t\mapsto\beta(t;\tau,\bar{\beta})$ the solution of
		\begin{equation}\label{beta(t;tau)}
			\beta(t)=\bar{\beta}+\int_\tau^tG(t^\prime,\beta(t^\prime))dt^\prime,
		\end{equation}
		where $G$ is defined by \eqref{G(t,beta)}. Then the map $\bar{\beta}\mapsto\beta(t;\tau,\bar{\beta})$ is strictly increasing and Lipschitz continuous.
		%Then there exists a constant $C$ such that, for any two initial	data $\bar{\beta}_1$, $\bar{\beta}_2$ and any $t,\tau\geq0$ the corresponding solutions satisfy
		
		%Moreover, $\beta(t;\tau,\bar{\beta})$ is strictly increasing with respect to $\bar{\beta}$.
	\end{lemm}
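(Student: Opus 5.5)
The plan is to derive both properties from the uniform Lipschitz bound \eqref{G(t,beta)_lip} on $G(t,\cdot)$ from Step 2 of the preceding lemma, together with Gronwall's inequality. Let $\bar\beta_1<\bar\beta_2$, and write $\beta_i(t)=\beta(t;\tau,\bar\beta_i)$. These solutions exist and are unique on $t\ge0$ by the same weighted-norm contraction argument as in Step 3, applied with initial time $\tau$; for $t<\tau$ the argument runs backward in time and is identical.

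\textbf{Lipschitz continuity.} Subtracting the two integral equations \eqref{beta(t;tau)} and using \eqref{G(t,beta)_lip} gives
\begin{equation*}
|\beta_2(t)-\beta_1(t)|\le|\bar\beta_2-\bar\beta_1|+C\left|\int_\tau^t|\beta_2(t')-\beta_1(t')|dt'\right|.
\end{equation*}
Gronwall's inequality then yields $|\beta_2(t)-\beta_1(t)|\le e^{C|t-\tau|}|\bar\beta_2-\bar\beta_1|$. The constant $C$ depends only on $\|u_0\|_{H^1}$, and $\|u(t)\|_{H^1}$ is bounded uniformly by conservation of energy, so this bound is uniform.

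\textbf{Strict monotonicity.} Set $\delta(t)\triangleq\beta_2(t)-\beta_1(t)$. This function is Lipschitz in $t$, since $G$ is bounded, and satisfies
\begin{equation*}
\dot\delta(t)=G(t,\beta_2(t))-G(t,\beta_1(t))\quad\text{for a.e. } t.
\end{equation*}
By \eqref{G(t,beta)_lip}, $|\dot\delta|\le C|\delta|$ almost everywhere. On any interval where $\delta>0$, the function $\log\delta$ is locally absolutely continuous with $|(\log\delta)'|\le C$, so $\delta(t)\ge e^{-C|t-\tau|}\delta(\tau)>0$. A continuity argument starting from $\delta(\tau)=\bar\beta_2-\bar\beta_1>0$ shows that $\delta$ cannot reach zero at any finite time, forward or backward. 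Hence $\bar\beta\mapsto\beta(t;\tau,\bar\beta)$ is strictly increasing, and in fact bi-Lipschitz, with
\begin{equation*}
e^{-C|t-\tau|}(\bar\beta_2-\bar\beta_1)\le\beta_2(t)-\beta_1(t)\le e^{C|t-\tau|}(\bar\beta_2-\bar\beta_1).
\end{equation*}

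\textbf{Main obstacle.} The delicate point is that $G$ is merely measurable in $t$. We must confirm that \eqref{G(t,beta)_lip} holds with a constant uniform in $t$, including at times in $\mathcal N$, where $\mu_{(t)}$ is singular and \eqref{y(t,beta)_absolutely continuous} fails. I would argue as follows. Since $\beta\mapsto y(t,\beta)$ is $1$-Lipschitz by Lemma \ref{lip_3}, it suffices to bound the integrand of $G$, namely
\begin{equation*}
\frac{s}{\mu}u_x+\frac{2}{\mu}u_x\left(-\left(\frac{A}{2}+\frac{s}{2\mu}\right)u^2-\frac{B}{m+1}u^{m+1}+\left(2k+\frac{\eta}{\mu}\right)u-P\right),
\end{equation*}
integrated over $[y(t,\beta_1),y(t,\beta_2)]$. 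Using $|u_x|\le\frac12(1+u_x^2)$ and the $L^\infty$ bounds on $u$ and $P$, this integral is at most $C\bigl[y_2-y_1+\mu_{(t)}((y_1,y_2))\bigr]\le C(\beta_2-\beta_1)$, by the same argument as in Step 2 of Lemma \ref{lip_3}. That argument is valid for every $t$, since only the absolutely continuous part $u_x^2\,dx\le\mu_{(t)}$ enters. Finally, since the Lipschitz and bi-Lipschitz bounds hold for every fixed $t$ and $\tau$, and null sets in time do not affect the integral equation, the conclusion is unaffected by the exceptional times in $\mathcal N$.
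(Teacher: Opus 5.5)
Your proof is correct. The Lipschitz half is the same as the paper's: subtract the integral equations, use the uniform Lipschitz bound on $G(t,\cdot)$, and apply Gronwall.

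Strict monotonicity is where you take a different route. The paper inserts the Gronwall upper bound back into the integral equation to get $\beta(t;\tau,\bar\beta_1)-\beta(t;\tau,\bar\beta_2)\ge(\bar\beta_1-\bar\beta_2)\bigl(1-C_{u_0}(t-\tau)\bigr)$. That gives positivity only for $t$ close to $\tau$. It then extends the result by a ``continuous method'', which amounts to restarting at intermediate times and tacitly uses uniqueness of the integral equation (the flow property). Your inequality $|(\log\delta)'|\le C$ gives the global lower bound $\delta(t)\ge e^{-C|t-\tau|}\delta(\tau)$ in one step, with no iteration, and makes the bi-Lipschitz property explicit. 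That lower bound is exactly what is needed in Step 1 of the uniqueness proof, where Lemma \ref{beta_monotone} is used to pass from ``a.e.\ $(t,\beta)$'' to ``a.e.\ $\bar\beta$''. It guarantees that $\{\bar\beta:\beta(t,\bar\beta)\in N\}$ is null whenever $N$ is null.

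Your justification of the $t$-uniform Lipschitz bound on $G$ is also more careful than the paper's. Step 2 of the preceding lemma derives it from \eqref{y(t,beta)_absolutely continuous}, which holds only when $\mu_{(t)}$ is absolutely continuous, and Definition \ref{conservative} does not assume that for a.e.\ $t$. Your estimate via $|u_x|\le\frac12(1+u_x^2)$ and $u_x^2\,dx\le\mu_{(t)}$, as in Step 2 of Lemma \ref{lip_3}, is valid at every $t$.
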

	\begin{proof}
		We may assume without loss of generality that $\tau<t$. It follows from \eqref{beta(t;tau)} that
		\begin{align*}
			|\beta(t;\tau,\bar{\beta}_{1})-\beta(t;\tau,\bar{\beta}_{2})|
			& \leq|\bar{\beta}_{1}-\bar{\beta}_{2}|+\int_{\tau}^{t}\left|G(t^\prime,\beta(t^\prime;\tau,\bar{\beta}_{1}))-G(t^\prime,\beta(t^\prime;\tau,\bar{\beta}_{2}))\right|dt^\prime \\
			& \leq|\bar{\beta}_{1}-\bar{\beta}_{2}|+C\int_{\tau}^{t}|\beta(t^\prime;\tau,\bar{\beta}_{1})-\beta(t^\prime;\tau,\bar{\beta}_{2})|dt^\prime.
		\end{align*}
		An application of the Gronwall lemma leads to
		$$|\beta(t;\tau,\bar{\beta}_{1})-\beta(t;\tau,\bar{\beta}_{2})|\leq e^{C|t-\tau|}|\bar{\beta}_{1}-\bar{\beta}_{2}|.$$
		In addition, for any $\bar{\beta}_{1}>\bar{\beta}_{2}$,
		\begin{align*}
			\beta(t;\tau,\bar{\beta}_{1})-\beta(t;\tau,\bar{\beta}_{2})
			& =\bar{\beta}_{1}-\bar{\beta}_{2}+\int_{\tau}^{t}G(t^\prime,\beta(t^\prime;\tau,\bar{\beta}_{1}))-G(t^\prime,\beta(t^\prime;\tau,\bar{\beta}_{2}))dt^\prime \\
			& \geq(\bar{\beta}_{1}-\bar{\beta}_{2})(1-C_{u_0}(t-\tau)),
		\end{align*}
		which gives monotonicity when $t$ is sufficiently close to $\tau$. By the continuous method, this property can be extended to all $t$ of interest.
	\end{proof}
	
	\begin{lemm}\label{lip_4}
		Let $u=u(t,x)$ be a conservative solution of \eqref{eq1} with $s\neq0$. Then the map $(t,\beta)\mapsto u(t,\beta)\triangleq u(t,y(t,\beta))$ is Lipschitz continuous with a constant depending only on $\|u_0\|_{H^1}$.
	\end{lemm}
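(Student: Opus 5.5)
Lipschitz continuity in $\beta$ at fixed $t$ is already contained in Lemma \ref{lip_3}, via \eqref{lip_u_beta}: $|u(t,\beta_2)-u(t,\beta_1)|\le\frac12|\beta_2-\beta_1|$. So the only new point is Lipschitz continuity in $t$, uniformly in $\beta$. The plan is to move along the unique characteristics constructed in the preceding lemma, on which $u$ evolves by $-P_x$. Then we use Lemma \ref{beta_monotone} and the fixed-time Lipschitz bound to transfer the estimate to a fixed $\beta$.

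Fix $\tau\ge0$, $\bar\beta\in\mathbb{R}$ and $t>\tau$. Let $t\mapsto\beta(t)=\beta(t;\tau,\bar\beta)$ be the solution of \eqref{beta(t;tau)}. By the previous lemma (applied with initial time $\tau$ and initial point $y(\tau,\bar\beta)$), $y(t)\triangleq y(t,\beta(t))$ is the characteristic solving \eqref{partial_t y(t)} and \eqref{coupled with (4.1)}. Moreover \eqref{u_Px} gives
\begin{equation*}
|u(t,\beta(t))-u(\tau,\bar\beta)|=\Big|\int_\tau^t P_x(t',y(t'))dt'\Big|\le \sup_{t'}\|P_x(t')\|_{L^\infty}(t-\tau).
\end{equation*}
The sup is controlled by $\|u(t')\|_{H^1}$, which is bounded by a constant depending only on $\|u_0\|_{H^1}$. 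This bound comes from the conservative energy bound $\int u^2+\mu u_x^2\,dx\le\mu_{(t)}$-total mass, combined with the convolution kernel $e^{-|x|/\sqrt\mu}$, which maps the $L^1$ source $u^2,u_x^2,u^{m+1}$ into $L^\infty$. The linear term $u$ is handled via $L^2*L^2\subset L^\infty$. Next, since $G$ is uniformly bounded, $|G|\le C_G$ with $C_G$ depending only on $\|u_0\|_{H^1}$ (see \eqref{G(t,beta)} and the $L^1$ bound of the source used in Lemma \ref{lip_3}, Step 3), we get
\begin{equation*}
|\beta(t)-\bar\beta|\le C_G(t-\tau).
\end{equation*}

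Combining these with the triangle inequality:
\begin{equation*}
|u(t,\bar\beta)-u(\tau,\bar\beta)|\le |u(t,\bar\beta)-u(t,\beta(t))|+|u(t,\beta(t))-u(\tau,\bar\beta)|\le \tfrac12 C_G(t-\tau)+C(t-\tau).
\end{equation*}
The first term uses Lemma \ref{lip_3} at the fixed time $t$. Together with the $\beta$-Lipschitz bound, this gives
\begin{equation*}
|u(t_1,\beta_1)-u(t_2,\beta_2)|\le L(|t_1-t_2|+|\beta_1-\beta_2|)
\end{equation*}
with $L$ depending only on $\|u_0\|_{H^1}$.

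The main obstacle is making sure the characteristic starting at $(\tau,\bar\beta)$ really is given in the $\beta$-coordinates by $\beta(t;\tau,\bar\beta)$. This requires that $\beta_0$ defined through \eqref{beta_0} at time $\tau$ equals $\bar\beta$, i.e. $y(\tau,\bar\beta)+\mu_{(\tau)}\{(-\infty,y(\tau,\bar\beta))\}=\bar\beta$. That identity fails when $\mu_{(\tau)}$ has an atom at $y(\tau,\bar\beta)$, i.e. for $\tau$ in the exceptional null set. I would handle this as follows. First prove the estimate for $\tau$ outside that null set, where $\mu_{(\tau)}$ is absolutely continuous and \eqref{y(t,beta)_absolutely continuous} holds. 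Then extend to all $\tau$ by continuity of $u(t,\beta)$ in $t$: this continuity follows from the Hölder continuity of $u(t,x)$ and the Lipschitz continuity of $t\mapsto y(t,\beta)$ in Lemma \ref{lip_3}. The Lipschitz bound with a uniform constant then passes to the limit.
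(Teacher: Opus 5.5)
Your argument is correct and is essentially the paper's proof. Both use the triangle inequality through $u(t,\beta(t;\tau,\bar\beta))$: the first term is controlled by the $\frac12$-Lipschitz bound in $\beta$ together with $|\beta(t)-\bar\beta|\le\|G\|_{L^\infty}|t-\tau|$, and the second by \eqref{u_Px} and $\|P_x\|_{L^\infty}$. Your detour of first taking $\tau$ outside the exceptional null set and then extending by continuity is valid but not needed, and the paper skips it: $\beta(\tau)=\bar\beta$ holds by construction, and the fact that $t\mapsto y(t,\beta(t))$ is a characteristic comes locally in time from the integral equation for $\beta$, not from the starting identity \eqref{beta_0}.
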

	\begin{proof}
		The Lipschitz continuity of $\beta\mapsto u(t,\beta)$ is given by Lemma \ref{lip_3}. Thus, it remains to verify that $t\mapsto u(t,\beta)$ is Lipschitz continuous.
		
		Let $\beta(t)$ be the solution of \eqref{beta(t;tau)}. Then \eqref{lip_u_beta} and \eqref{u_Px} yield
		\begin{align*}
			&|u(t,y(t,\bar{\beta}))-u(\tau,y(\tau,\bar{\beta}))| \\
			\leq&|u(t,y(t,\bar{\beta}))-u(t,y(t,\beta(t)))|+|u(t,y(t,\beta(t)))-u(\tau,y(\tau,\beta(\tau)))| \\
			\leq&\frac{1}{2}|\beta(t)-\bar{\beta}|+|t-\tau|\|P_{x}\|_{L^{\infty}} \\
			\leq&|t-\tau|\left(\frac{1}{2}\|G\|_{L^{\infty}}+\|P_{x}\|_{L^{\infty}}\right)\\
			\leq& C_{u_0}|t-\tau|.\qedhere
		\end{align*}
	\end{proof}	
	
	\subsection{Proof of uniqueness}
	This subsection is devoted to the proof of Theorem \ref{thm_uniqueness}. We study how the gradient $u_x$ of a conservative solution varies along a good characteristic. Then we construct a semi-linear system admitting a unique solution, which in turn proves the uniqueness of the conservative solution $u$ in the original variables.
	
	\begin{proof}[Proof of Theorem \ref{thm_uniqueness}]
		The proof proceeds in five steps.
		
		\textbf{Step 1.} It follows from Lemmas \ref{lip_3} and \ref{lip_4} that $(t,\beta)\mapsto (y,u)(t,\beta)$ is Lipschitz continuous. Similarly, we can check that the maps $\beta\mapsto G(t,\beta)\triangleq G(t,y(t,\beta))$ and $\beta\mapsto P_x(t,\beta)\triangleq P_x(t,y(t,\beta))$ are also Lipschitz continuous. By Rademacher’s theorem, the partial derivatives $y_t$, $y_\beta$, $u_t$, $u_\beta$, $G_\beta$, $P_{x,\beta}$ exist almost everywhere. Furthermore, a.e. point $(t,\beta)$ is a Lebesgue point for these derivatives. Let $t\mapsto\beta(t,\bar{\beta})$ be the unique solution of \eqref{beta(t;tau)} with $\tau=0$. We see from Lemma \ref{beta_monotone} that the following assertion holds for a.e. $\bar{\beta}$.
		
		\textbf{(GC)} For a.e. $t>0$, the point $(t,\beta(t,\bar{\beta}))$ is a Lebesgue point for the partial derivatives $y_t$, $y_\beta$, $u_t$, $u_\beta$, $G_\beta$, $P_{x,\beta}$. Moreover, $y_\beta(t,\beta(t,\bar{\beta}))>0$ for a.e. $t>0$.
		
		If (GC) holds, we then call $t\mapsto y(t,\beta(t,\bar{\beta}))$ a good characteristic.
		
		\textbf{Step 2.} We aim to obtain an ODE for $y_\beta$ and $u_\beta$ along a good characteristic. Let $t\mapsto\beta(t,\bar{\beta})$ still denote the unique solution of \eqref{beta(t;tau)} with $\tau=0$. Assume that $t\mapsto y(t,\beta(t,\bar{\beta}))$ is a good characteristic. Differentiating \eqref{beta(t;tau)} with respect to $\bar{\beta}$ yields		
		\begin{align}\label{11}
			\frac{\partial}{\partial\bar{\beta}}\beta(t,\bar{\beta})=1+\int_{0}^{t}G_{\beta}(t^\prime,\beta(t^\prime,\bar{\beta}))\cdot\frac{\partial}{\partial\bar{\beta}}\beta(t^\prime,\bar{\beta})dt^\prime.
		\end{align}
		Next, integrating \eqref{partial_t y(t)} over $(0,t)$ gives 
		\begin{align*}
			y(t,\beta(t,\bar{\beta}))=y(0,\bar{\beta})+\int_0^t\frac{s}{\mu}u(t^\prime,y(t^\prime,\beta(t^\prime,\bar{\beta})))-\frac{\eta}{\mu}dt^\prime.
		\end{align*}
		Differentiating the above equation with respect to $\bar{\beta}$, we have
		\begin{align}\label{22}
			y_{\beta}(t,\beta(t,\bar{\beta}))\cdot\frac{\partial}{\partial\bar{\beta}}\beta(t,\bar{\beta})=y_{\beta}(0,\bar{\beta})+\int_{0}^{t}\frac{s}{\mu}u_{\beta}(t^\prime,\beta(t^\prime,\bar{\beta}))\cdot\frac{\partial}{\partial\bar{\beta}}\beta(t^\prime,\bar{\beta})dt^\prime.
		\end{align}
		Finally, differentiating \eqref{u_Px} with respect to $\bar{\beta}$, we get
		\begin{align}\label{33}
			u_{\beta}(t,\beta(t,\bar{\beta}))\cdot\frac{\partial}{\partial\bar{\beta}}\beta(t,\bar{\beta})=u_{\beta}(0,\bar{\beta})-\int_{0}^{t}P_{x,\beta}(t^\prime,\beta(t^\prime,\bar{\beta}))\cdot\frac{\partial}{\partial\bar{\beta}}\beta(t^\prime,\bar{\beta})dt^\prime.
		\end{align}
		In particular, the left hand sides of \eqref{11}-\eqref{33} are absolutely continuous. Combining these three equations, we obtain
		\begin{equation}
			\left\{
			\begin{array}{l}
				\frac{d}{dt}y_\beta+G_\beta y_\beta=\frac{s}{\mu}u_\beta, \\
				\frac{d}{dt}u_\beta+G_\beta u_\beta=-P_{xx}\cdot y_\beta,
			\end{array}\right.
		\end{equation}
		where
		$$
		P_{xx}=-\frac{1}{\mu}\left(-\left(\frac{A}{2}+\frac{s}{2\mu}\right)u^2+\frac{s}{2}u_x^2-\frac{B}{m+1}u^{m+1}+\left(2k+\frac{\eta}{\mu}\right)u-P\right).$$
		
		\textbf{Step 3.} We now return to the original $(t,x)$ coordinates and derive an evolution equation for $u_x$ along a good characteristic. Suppose that $\bar{x}$ is a Lebesgue point for the map $x\mapsto u_x(0,x)$. Let $\bar{\beta}$ be such that $\bar{x}=y(0,\bar{\beta})$ and assume that $t\mapsto y(t,\beta(t,\bar{\beta}))$ is a good characteristic. 
		We see from \eqref{y(t,beta)_absolutely continuous} that for $t\notin\mathcal{N}$, 
		$$y_\beta(t,\beta(t,\bar{\beta}))=\frac{1}{1+u_x^2(t,y(t,\beta(t,\bar{\beta})))}>0.$$
		Whenever $y_\beta>0$, along the characteristic through $(0,\bar{x})$ the partial derivative $u_x$ can be computed as
		$$u_x\left(t,y(t,\beta(t,\bar{\beta}))\right)=\frac{u_\beta(t,\beta(t,\bar{\beta}))}{y_\beta(t,\beta(t,\bar{\beta}))}.$$
		Thus, it follows from Step 2 that as long as $y_\beta>0$, the map $t\mapsto u_x(t,y(t,\beta(t,\bar{\beta})))$ is absolutely continuous and satisfies
		\begin{align*}
			\frac{d}{dt}u_{x}(t,y(t,\beta(t,\bar{\beta})))=&\frac{d}{dt}\left(\frac{u_{\beta}}{y_{\beta}}\right)=\frac{-y_{\beta}\left(P_{xx}y_{\beta}+G_{\beta}u_{\beta}\right)-u_{\beta}\left(\frac{s}{\mu}u_{\beta}-G_{\beta}y_{\beta}\right)}{y_{\beta}^{2}}=-P_{xx}-\frac{s}{\mu}\frac{u_{\beta}^{2}}{y_{\beta}^{2}},
		\end{align*}
		which implies
		\begin{align}
			&\frac{d}{dt}\arctan u_{x}(t,y(t,\beta(t,\bar{\beta})))=\frac{1}{1+u_{x}^{2}}\cdot\frac{d}{dt}u_{x}=\left(-P_{xx}-\frac{s}{\mu}\frac{u_{\beta}^{2}}{y_{\beta}^{2}}\right)y_{\beta}\notag \\
			=&\frac{1}{\mu}\left(-\left(\frac{A}{2}+\frac{s}{2\mu}\right)u^2-\frac{s}{2}u_x^2-\frac{B}{m+1}u^{m+1}+\left(2k+\frac{\eta}{\mu}\right)u-P\right)y_{\beta}\notag\\
			=&\frac{1}{\mu}\left(-\left(\frac{A}{2}+\frac{s}{2\mu}\right)u^2-\frac{B}{m+1}u^{m+1}+\left(2k+\frac{\eta}{\mu}\right)u-P+\frac{s}{2}\right)y_{\beta}-\frac{s}{2\mu}.
		\end{align}
		
		\textbf{Step 4.} Define 
		\begin{equation}
			v\triangleq\left\{
			\begin{array}{cl}
				2\arctan u_x & \quad\mathrm{if}\quad 0<y_\beta\leq1, \\
				\pi & \quad\mathrm{if}\quad y_\beta=0.
			\end{array}\right.
		\end{equation}
		It is easy to check that
		\begin{align*}
			y_\beta=\frac{1}{1+u_x^2}=\cos^2\frac{v}{2},\quad\frac{u_x}{1+u_x^2}=\frac{1}{2}\sin v,\quad\frac{u_x^2}{1+u_x^2}=\sin^2\frac{v}{2}.
		\end{align*}
		Based on the preceding analysis, we obtain the semi-linear system
		\begin{equation}\label{semi_unique}
			\left\{
			\begin{array}{l}
				\frac{d}{dt}\beta(t,\bar{\beta}) =G(t,\beta(t,\bar{\beta})), \\
				\frac{d}{dt}y(t,\beta(t,\bar{\beta})) =\frac{s}{\mu}u(t,\beta(t,\bar{\beta}))-\frac{\eta}{\mu}, \\
				\frac{d}{dt}u(t,\beta(t,\bar{\beta})) =-P_{x}(t,\beta(t,\bar{\beta})), \\
				\frac{d}{dt}v(t,\beta(t,\bar{\beta})) =\frac{2}{\mu}\cos^2\frac{v}{2}\left(-(\frac{A}{2}+\frac{s}{2\mu})u^2-\frac{B}{m+1}u^{m+1}+(2k+\frac{\eta}{\mu})u-P+\frac{s}{2}\right)-\frac{s}{\mu},
			\end{array}\right.
		\end{equation}
		with the initial condition
		\begin{equation}\label{semi_unique_initial}
			\left\{
			\begin{array}{l}
				\beta(0,\bar{\beta})= \bar{\beta}, \\
				y(0,\bar{\beta})= y_0(\bar{\beta}), \\
				u(0,\bar{\beta})= u_0(y(0,\bar{\beta})), \\
				v(0,\bar{\beta})= 2\arctan u_{0x}(y(0,\bar{\beta})),
			\end{array}\right.
		\end{equation}
		where $P$ and $P_x$ can be expressed in terms of the variable $\beta$ as
		\begin{align*}
			P(y(\beta)) =&\frac{1}{2\sqrt{\mu}}\int_{-\infty}^{\infty}\exp\left\{-\frac{1}{\sqrt{\mu}}\left|\int_{\beta}^{\beta^{\prime}}\cos^{2}\frac{v(s)}{2}ds\right|\right\} \\
			& \cdot\left\{\left[-(\frac{A}{2}+\frac{s}{2\mu})u^2(\beta^{\prime})-\frac{B}{m+1}u^{m+1}(\beta^{\prime})+(2k+\frac{\eta}{\mu})u(\beta^{\prime})\right]\cos^2\frac{v(\beta^{\prime})}{2}+\frac{s}{2}\sin^2\frac{v(\beta^{\prime})}{2}\right\}d\beta^{\prime},\\
			P_x(y(\beta)) =&\frac{1}{2\mu}\left(\int_\beta^\infty-\int_{-\infty}^\beta\right)\exp\left\{-\frac{1}{\sqrt{\mu}}\left|\int_{\beta}^{\beta^{\prime}}\cos^{2}\frac{v(s)}{2}ds\right|\right\} \\
			& \cdot\left\{\left[-(\frac{A}{2}+\frac{s}{2\mu})u^2(\beta^{\prime})-\frac{B}{m+1}u^{m+1}(\beta^{\prime})+(2k+\frac{\eta}{\mu})u(\beta^{\prime})\right]\cos^2\frac{v(\beta^{\prime})}{2}+\frac{s}{2}\sin^2\frac{v(\beta^{\prime})}{2}\right\}d\beta^{\prime}.
		\end{align*}
		By the Lipschitz continuity of all coefficients, the Cauchy problem \eqref{semi_unique}-\eqref{semi_unique_initial} has a unique global solution.
		
		\textbf{Step 5.} Let $u$ and $\tilde{u}$ be two conservative solutions of \eqref{eq1} with the same initial data $u_0\in H^1(\mathbb{R})$. The corresponding Lipschitz continuous maps $\beta\mapsto y(t,\beta)$, $\beta\mapsto\tilde{y}(t,\beta)$ are strictly increasing for a.e. $t\geq0$. According to Lemma \ref{beta_monotone}, the maps $\bar{\beta}\mapsto\beta(t,\bar{\beta})$ and $\bar{\beta}\mapsto\tilde{\beta}(t,\bar{\beta})$ are strictly increasing. Thus, we see that the maps $\bar{\beta}\mapsto y(t,\beta(t,\bar{\beta}))$, $\bar{\beta}\mapsto\tilde{y}(t,\tilde{\beta}(t,\bar{\beta}))$ are strictly increasing for a.e. $t\geq0$. Consequently, they have continuous inverses $x\mapsto\beta^{-1}(t,x)$, $x\mapsto\tilde{\beta}^{-1}(t,x)$.
		
		It follows from Step 4 that 
		$$y(t,\beta(t,\bar{\beta}))=\tilde{y}(t,\tilde{\beta}(t,\bar{\beta})),\quad u(t,\beta(t,\bar{\beta}))=\tilde{u}(t,\tilde{\beta}(t,\bar{\beta})),$$
		which also implies
		$$\beta^{-1}(t,x)=\tilde{\beta}^{-1}(t,x).$$
		Hence, for a.e. $t\geq0$, we have
		$$u(t,x)=u(t,\beta(t,\beta^{-1}(t,x)))=\tilde{u}(t,\tilde{\beta}(t,\tilde{\beta}^{-1}(t,x)))=\tilde{u}(t,x).$$
		This completes the proof of uniqueness.
	\end{proof}

	\smallskip
	\noindent\textbf{Acknowledgments.} This work was supported by the National Natural Science Foundation of China (No.12571261).
	
	%The authors thank the referee for valuable comments and suggestions.
	
	\noindent\textbf{Data Availability.}
	No data were used for the research described in the article.
	
	\phantomsection
	\addcontentsline{toc}{section}{\refname}
	%Ìí¼Ó²Î¿¼ÎÄÏ×µ½ÊéÇ©£¬ºê°ü hyperref
	\bibliographystyle{abbrv} %plain ,%alpha, %abbrv
	\bibliography{Feneref}
	
\end{document}